\newtheorem{thm}{Theorem}[section]
\newtheorem{cor}[thm]{Corollary}
\newtheorem{lem}[thm]{Lemma}
\newtheorem{prop}[thm]{Proposition}
\newtheorem{obs}[thm]{Observation}
\newtheorem{fact}[]{Fact}
\newtheorem{defn}[thm]{Definition}
\newtheorem{ex}[thm]{Example}
\title{Power domination with random sensor failure}
\author{Beth Bjorkman\thanks{US Air Force Research Lab, Wright-Patterson AFB, OH (beth.morrison@us.af.mil)} \and Zachary Brennan\thanks{Iowa State University, Ames, IA (brennanz@iastate.edu)} \and Mary Flagg\thanks{University of St. Thomas, Houston, TX 77006 (flaggm@stthom.edu)} \and Johnathan Koch\thanks{Applied Research Solutions, Beavercreek, OH (jkoch@appliedrs.com)}}
\begin{document}
    \maketitle
    \begin{abstract}
        The power domination problem seeks to determine the minimum number of phasor measurement units (PMUs) needed to monitor an electric power network.
        We introduce random sensor failure before the power domination process occurs and call this the \emph{fragile power domination process}.
        For a given graph, PMU placement, and probability of PMU failure $q$, we study the expected number of observed vertices at the termination of the fragile power domination process.
        This expected value is a polynomial in $q$, which we relate to fault-tolerant and PMU-defect-robust power domination.
        We also study the probability of that the entire graph becomes observed and give results for some graph families.
    \end{abstract}
    \section{Introduction}
        The power domination problem, defined by Haynes et al. in \cite{hhhh02}, seeks to find the placement of the minimum number of phasor measurement units (PMUs) needed to monitor an electric power network.
        This process was simplified by Brueni and Heath in \cite{bh05}.
        PMUs are placed on an initial set of vertices of a graph, and vertex observation rules define a propagation process on the graph.
        
        Pai, Chang, and Wang \cite{pcw10} provided a variation to power domination which looked for the minimum number of PMUs needed to monitor a power network in the case where $k$ of the PMUs will fail, called \emph{$k$-fault-tolerant power domination}.
        Their work allows the placement of only one PMU per vertex.
        A generalization of this problem is \emph{PMU-defect-robust power domination} which allows for multiple PMUs to be placed at a given vertex, defined  by Bjorkman, Conrad, and Flagg \cite{bcf23}. 
        
        Fault-tolerant and PMU-defect-robust power domination study a fixed number of PMU failures.
        However, actual sensor failures occur randomly. 
        Thus, we assign a probability of failure to the PMUs to create the \emph{fragile power domination process}.
        By studying a model in which PMUs have a random chance of failure, the primary question is no longer to find the placement of a minimum number of sensors. 
        Instead, the \emph{expected observability} of the network is studied.
        
        In particular, we define the \emph{expected value polynomial} for a graph $G$ and PMU placement $S$ as a function of the PMU failure probability.
        This polynomial is shown to have a direct connection to $k$-fault-tolerant and $k$-PMU-defect-robust power domination.
        We use this polynomial to determine the probability that the entire graph will be observed.
        Methods of comparing this polynomial for different PMU placements are presented, and the polynomial is calculated for families of graphs.
        
        In \cref{sec:preliminaries}, we give definitions, establish useful binomial properties, and formally define relevant power domination variations.
        We then define the fragile power domination process and the associated expected value polynomial, and in \cref{sec:propertiesEGSq} explore properties of this polynomial.
        In \cref{sec:probability}, we consider the probability of observing the entire graph.
        Graph families are studied in \cref{sec:families}.
        Finally, in \cref{sec:futurework} we give ideas for future work.
    \section{Preliminaries}\label{sec:preliminaries}
        To begin, necessary graph theoretic definitions and some useful properties of binomial coefficients are established.
        The relevant notions of power domination, failed power domination, fault-tolerant power domination, and PMU-defect-robust power domination are given.
        Finally, we introduce fragile power domination and the expected value polynomial.
        \subsection{Graph Theory}
            A simple, undirected graph $G$ is a set $V(G)$ of vertices and a set $E(G)$ of edges.
            Each edge is an unordered pair of distinct vertices $\{x,y\}$, usually written $xy$. 
            When $xy$ is an edge, $x$ and $y$ are said to be \emph{adjacent} or \emph{neighbors}.
            A graph $H$ is a \emph{subgraph} of $G$ if $V(H) \subseteq V(G)$ and $E(H) \subseteq E(G)$.
            A graph $H$ is an \emph{induced subgraph} of $G$ if $V(H)\subseteq V(G)$ and $E(H)=\{xy\in E(G): x,y\in V(H) \}$ and is denoted by $G[V(H)]$.
            
            A \emph{path} from $v_1$ to $v_{\ell+1}$ is a sequence of distinct vertices $v_1,v_2,\ldots,v_{\ell+1}$ such that $v_iv_{i+1}$ is an edge for all $i=1,2,\ldots,\ell$.
            A graph $G$ is \emph{connected} if, for all $u,v\in V(G)$, there exists a path from $u$ to $v$.
            For a graph $G$ that is not connected, the subgraphs $H_1,\ldots,H_k$ are the \emph{connected components} of $G$ when $H_1,\ldots, H_k$ are connected and have no edges between them.
            
            A \emph{cycle} from $v_1$ to $v_{\ell}$ is a sequence of distinct vertices $v_1,v_2,\ldots,v_{\ell}$ such that $v_iv_{i+1}$ is an edge for all $i=1,2,\ldots,\ell$ and $v_1v_\ell$ is an edge.
            The \emph{neighborhood} of $u\in V(G)$, denoted by $N(u)$, is the set containing all neighbors of $u$.
            The \emph{closed neighborhood} of $u$ is $N[u]=N(u)\cup\{u\}$.
            Given an initial set $S$ of vertices of a graph $G$, $S$ is a \emph{dominating set} of $G$ if $\bigcup_{v\in S}N[v]=V(G)$.
            A \emph{universal vertex} $v$ has $N[v]=V(G)$.
            The \emph{degree} of a vertex $u$ is $\deg(u)=|N(u)|$.
            A \emph{leaf} is a vertex $v$ with $\deg(v)=1$.
            
            A \emph{subdivision} of an edge $xy$ creates a new vertex $w$ and replaces the edge $xy$ with the edges $xw$ and $wy$.
            If the edge $xy$ is subdivided $a$ times, vertices $w_1,w_2,\ldots,w_a$ are added to the vertex set and the edges $xw_1, w_1w_2,\ldots, w_{a-1}w_a, w_ay$ replace the edge $xy$.
            
            We denote the \emph{path on $n$ vertices} by $P_n$, the \emph{cycle on $n$ vertices} by $C_n$, and the \emph{complete graph on $n$ vertices} by $K_n$.
            The \emph{wheel on $n$ vertices}, denoted by $W_n$, is constructed by adding a universal vertex to $C_{n-1}$.
            A \emph{complete multipartite graph}, denoted by $K_{r_1,r_2,\ldots,r_k}$, has its vertex set partitioned into disjoint sets $V_1,V_2,\ldots,V_k$ with $|V_i|=r_i$, and edge set $\{xy : x\in V_i, y\in V_j, i\neq j\}$.
            The case of $k=2$ is the \emph{complete bipartite graph}.
            A special case of the complete bipartite graph is the \emph{star on $n$ vertices}, $S_n=K_{1,n-1}$.
        \subsection{Useful facts and notation}
            Some binomial identities will be used extensively and warrant being called out explicitly.
            The first two facts follow immediately from the binomial theorem, a proof of which can be found in Section 3.2 of \cite{CB_stats}.  
            \begin{fact}\label{fact1}
            Let $q\in[0,1]$ and $n\in\mathbb{N}$. Then
            \[\sum_{k=0}^n\binom{n}{k}q^{n-k}(1-q)^k=1.\]
            \end{fact}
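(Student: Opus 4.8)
The plan is to recognize this as a direct instance of the binomial theorem. Recall that for any real (or complex) numbers $a,b$ and any $n\in\mathbb{N}$ we have $(a+b)^n=\sum_{k=0}^n\binom{n}{k}a^{n-k}b^k$; this is exactly the statement cited from Section 3.2 of \cite{CB_stats}. So the first and essentially only step is to substitute $a=q$ and $b=1-q$ into this identity, which yields
\[
\bigl(q+(1-q)\bigr)^n=\sum_{k=0}^n\binom{n}{k}q^{n-k}(1-q)^k.
\]
Then I would observe that the left-hand side simplifies, since $q+(1-q)=1$ and $1^n=1$, giving the claimed equality. The hypothesis $q\in[0,1]$ is not actually needed for the algebraic identity, but it is the range of interest since $q$ will be a probability; I would mention this only in passing.

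If a self-contained argument is preferred over invoking the binomial theorem as a black box, the alternative is a short induction on $n$. The base case $n=0$ reads $\binom{0}{0}q^{0}(1-q)^{0}=1$. For the inductive step, one writes the sum for $n+1$, applies Pascal's rule $\binom{n+1}{k}=\binom{n}{k}+\binom{n}{k-1}$, splits the sum into two pieces, reindexes the second, and factors out $q$ from one piece and $(1-q)$ from the other to recover $q\cdot 1+(1-q)\cdot 1=1$ by the inductive hypothesis. I would only sketch this and not grind through the reindexing.

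There is no real obstacle here: the content is entirely standard, and the statement is recorded only because the identity is used repeatedly later (for instance, to see that the coefficients of the expected value polynomial sum appropriately). Accordingly I would keep the proof to one or two lines, citing the binomial theorem and the evaluation $q+(1-q)=1$, and reserve the inductive proof as an optional remark.
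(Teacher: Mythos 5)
Your proposal is correct and matches the paper's treatment exactly: the paper states that this fact ``follows immediately from the binomial theorem'' (citing Section 3.2 of \cite{CB_stats}), which is precisely your substitution $a=q$, $b=1-q$ with $q+(1-q)=1$. The optional induction sketch is fine but unnecessary; the one-line binomial-theorem argument is all that is intended here.
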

            \begin{fact}\label{fact3}
            Let $n\in\mathbb{N}$. Then
            \[\sum_{k=0}^n (-1)^k\binom{n}{k}=0.\]
            \end{fact}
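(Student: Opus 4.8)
The plan is to obtain the identity as an immediate specialization of the binomial theorem, exactly as the surrounding text advertises and in parallel with the derivation of \cref{fact1}. The binomial theorem gives $(x+y)^n=\sum_{k=0}^n\binom{n}{k}x^ky^{n-k}$ for all $x,y$ and all $n\in\mathbb{N}$. I would substitute $x=-1$ and $y=1$: the right-hand side becomes $\sum_{k=0}^n\binom{n}{k}(-1)^k\cdot 1^{n-k}=\sum_{k=0}^n(-1)^k\binom{n}{k}$, which is precisely the sum in the statement, while the left-hand side becomes $(-1+1)^n=0^n=0$ (using $n\ge 1$). Equating the two sides finishes the proof.

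The one point I would be careful about is the degenerate case $n=0$, where $0^0=1$ and the sum equals $\binom{0}{0}=1$, so the identity fails there; since $\mathbb{N}$ is taken to start at $1$ here (consistent with the later use of $n$ as a positive count of PMUs), this is not an issue, but I would add a short remark making the convention explicit, or alternatively state the fact for $n\ge 1$.

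If a proof not invoking the binomial theorem were desired, two standard alternatives are available: a one-line induction on $n$ using Pascal's rule $\binom{n}{k}=\binom{n-1}{k}+\binom{n-1}{k-1}$ to split the alternating sum into two telescoping copies of the $(n-1)$ case, or the combinatorial argument that for $n\ge 1$ toggling a fixed element yields a bijection between the even- and odd-sized subsets of an $n$-set. Since the text explicitly points to the binomial theorem, I would present that derivation and mention the others only in passing. There is no real obstacle here; the statement is elementary, and the only thing requiring any attention is the $n=0$ boundary case and the convention for $\mathbb{N}$.
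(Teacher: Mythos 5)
Your proof is correct and matches the paper's approach exactly: the paper simply notes that this fact "follows immediately from the binomial theorem," and your substitution $x=-1$, $y=1$ into $(x+y)^n$ is the intended derivation. Your observation about the $n=0$ boundary case (where the sum equals $1$, not $0$) is a valid caveat the paper glosses over, and noting the convention $\mathbb{N}=\{1,2,\ldots\}$ resolves it.
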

            We refer the reader to Section 2.2 of \cite{CB_stats} for proof of the following fact.
            \begin{fact}\label{fact2}
            Let $q\in[0,1]$ and $n\in\mathbb{N}$. Then
            \[\sum_{k=0}^n k\binom{n}{k}q^{n-k}(1-q)^k=n(1-q).\]
            \end{fact}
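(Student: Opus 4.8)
The plan is to reduce \cref{fact2} to \cref{fact1} by means of the absorption (committee–chair) identity $k\binom{n}{k}=n\binom{n-1}{k-1}$, valid for $1\le k\le n$. Since the $k=0$ term of the sum vanishes, I would first rewrite
\[
\sum_{k=0}^{n}k\binom{n}{k}q^{n-k}(1-q)^{k}=\sum_{k=1}^{n}n\binom{n-1}{k-1}q^{n-k}(1-q)^{k},
\]
and then substitute $j=k-1$ so that the range becomes $0\le j\le n-1$ and the exponents become $q^{(n-1)-j}$ and $(1-q)^{j+1}$.

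Next I would pull the constant $n$ and one factor of $(1-q)$ out of the sum, leaving
\[
n(1-q)\sum_{j=0}^{n-1}\binom{n-1}{j}q^{(n-1)-j}(1-q)^{j},
\]
and invoke \cref{fact1} with $n-1$ in place of $n$ to conclude that the remaining sum equals $1$, which gives the claimed value $n(1-q)$. The case $n=0$ (where both sides are $0$) can be disposed of separately or simply noted as the empty-sum convention, so no generality is lost.

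There is essentially no serious obstacle here; the only points requiring a little care are the legitimacy of dropping the $k=0$ term before applying the absorption identity (which is only stated for $k\ge1$), the bookkeeping of the exponents under the index shift, and ensuring the hypotheses of \cref{fact1} are met for the parameter $n-1$. An alternative, equally short route would be to differentiate the polynomial identity $(q+x)^{n}=\sum_{k=0}^{n}\binom{n}{k}q^{n-k}x^{k}$ with respect to $x$, multiply through by $x$, and evaluate at $x=1-q$; I would mention this as a remark but prefer the combinatorial argument above since it keeps everything within the binomial-coefficient toolkit already assembled in this section.
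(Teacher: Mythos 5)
Your proof is correct: dropping the vanishing $k=0$ term, applying the absorption identity $k\binom{n}{k}=n\binom{n-1}{k-1}$, shifting the index, and invoking \cref{fact1} with $n-1$ in place of $n$ gives exactly $n(1-q)$, and your separate handling of $n=0$ closes the only edge case. Note, however, that the paper does not prove \cref{fact2} at all; it simply refers the reader to Section 2.2 of \cite{CB_stats}, where the sum is recognized as the mean of a $\mathrm{Binomial}(n,1-q)$ random variable. The standard textbook derivation of that mean is essentially your absorption argument (or, equivalently, your differentiation remark), so you have not diverged from the intended route so much as supplied the self-contained computation the paper outsources. The only thing your write-up buys beyond the citation is that it stays entirely within \cref{fact1} and the binomial identities already assembled in this section, which is arguably preferable for a reader who does not have the reference at hand.
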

            
            Let $A$ and $B$ be a statistical events, that is, subsets of a finite sample space.
            We write $\operatorname{Prob} \big[ A \big]$ to denote the probability that $A$ occurs, and $\operatorname{Prob} \big[ A:B \big]$ to denote the probability of $A$ given that $B$ occurs.
            If $X$ is a random variable with possible outcomes $x_1,x_2,\ldots,x_n$, then $\{X=x_i\}$ is the event that $X$ takes the value $x_i$ and occurs with some probability $\operatorname{Prob} \big[ X=x_i \big]$.
            The expected value of the random variable is $\operatorname{\mathbb{E}} \left[ X \right]=\sum_{i=1}^n x_i\operatorname{Prob} \big[ X=x_i \big]$. 
            
            If $A_1$ and $A_2$ are events, $A_1\vee A_2$ denotes the event that at least one of $A_1$ or $A_2$ occurs.
            Additionally, $A_1\wedge A_2$ denotes the event that both $A_1$ and $A_2$ occur.
            When $A_1,\ldots,A_k$ are all events, the notations
            \[\bigvee_{i=1}^k A_i=A_1\vee\cdots\vee A_k
            \quad\text{and}\quad
            \bigwedge_{i=1}^k A_i=A_1\wedge\cdots\wedge A_k\]
            are used.
        \subsection{Power Domination}
            Formally, the \emph{power domination process} on a graph $G$ with initial set $S\subseteq V(G)$ proceeds recursively:
            \begin{enumerate}
            \item[1.] (\emph{Domination}) $B = \displaystyle \bigcup_{v\in S} N[v]$
            \item[2.] (\emph{Zero Forcing}) While there exists $v\in B$ such that exactly one neighbor, say $u$, of $v$ is \emph{not} in $B$, add $u$ to $B$. 
            \end{enumerate}
            During the process, a vertex in $B$ is \emph{observed} and a vertex not in $B$ is \emph{unobserved}.
            We denote the set of vertices observed at the termination of the power domination process by $\operatorname{Obs}\left(G;S\right)$.
            If $v$ causes $u$ to join $B$, then \emph{$v$ observes $u$}.
            A \emph{power dominating set} of a graph $G$ is an initial set $S$ such that $\operatorname{Obs}\left(G;S\right)=V(G)$.
            The \emph{power domination number} of a graph $G$ is the minimum cardinality of a power dominating set of $G$ and is denoted by $\gamma_P\left(G\right)$. 
            
            \emph{Failed power domination} is a related, reversed, notion to power domination introduced by Glasser et. al. in \cite{gjlr20} and will be very useful throughout this work. 
            \begin{defn}
            The \emph{failed power domination number} of a graph $G$ is the cardinality of a largest set $F \subseteq V(G)$ such that $\operatorname{Obs}\left(G;F\right) \neq V(G)$.
            This maximum cardinality is denoted by $\overline{\gamma}_P\left(G\right)$.
            \end{defn}
            
            \emph{Fault-tolerant power domination} was introduced in \cite{pcw10} in order to account for PMU failure when determining minimum power dominating sets. 
            \begin{defn}
            For a graph $G$ and an integer $k$ with $0\leq k < |V(G)|$, a set $S\subseteq V(G)$ is called a \emph{$k$-fault-tolerant power dominating set of $G$} if $S\setminus F$ is still a power dominating set of $G$ for any subset $F\subset V(G)$ with $|F|\leq k$.  The \emph{$k$-fault-tolerant power domination number} is the minimum cardinality of a $k$-fault-tolerant power dominating set of $G$. 
            \end{defn}
            
            \emph{PMU-defect-robust power domination} was introduced in \cite{bcf23} to extend vertex-fault tolerant power domination into multiset PMU placements.
            \begin{defn}\label{def:gpk}
                Let $G$ be a graph, $k \geq 0$ be an integer, and $S$ be a set or multiset whose elements are in $V(G)$.
                The set or multiset $S$ is a \emph{$k$-PMU-defect-robust power dominating set} ($k$-rPDS) if for any submultiset $F$ with $|F|=k$, $S \setminus F$ contains a power dominating set of vertices.
                The minimum cardinality of a $k$-rPDS is the $k$-PMU-defect-robust power domination number.
            \end{defn}
        \subsection{Fragile Power Domination}
            The \emph{fragile power domination process} is a variation of the power domination process with the addition that before the domination step, each PMU fails independently with probability $q$.
            We utilize the same notation and terminology as the power domination process, given in the previous section, with suitable modifications.
            \newpage
            \begin{defn}
                The \emph{fragile power domination process} on a graph $G$ with initial set or multiset of vertices $S$ is as follows:
                \begin{enumerate}
                    \item[0.]
                        \emph{(Sensor Failure)} Let $S^* = \varnothing$. For each $v\in S$, add $v$ to $S^*$ with probability $1-q$
                    \item[1.]
                        \emph{(Domination)} $B = \displaystyle \bigcup_{v\in S^*} N[v]$
                    \item[2.]
                        \emph{(Zero Forcing)} While there exists $v\in B$ such that exactly one neighbor, say $u$, of $v$ is \emph{not} in $B$, add $u$ to $B$. 
                \end{enumerate}
            \end{defn}
            
            Given initial PMU placement $S$, the set or multiset $S^*$ is the random collection of vertices which did not fail with probability $1-q$.
            We denote the set of vertices observed at the termination of the fragile power domination process by $\operatorname{Obs}\left(G;S,q\right)$.
            Observe that $|\operatorname{Obs}\left(G;S,q\right)|$ is a random variable and $\operatorname{Obs}\left(G;S\right)=\operatorname{Obs}\left(G;S,0\right)$.
            Moreover, for a fixed set or multiset of vertices $S$, the expected value of $|\operatorname{Obs}\left(G;S,q\right)|$ is a polynomial in $q$.
            \begin{defn}
                For a given graph $G$, set or multiset of vertices $S$, and probability of PMU failure $q$, we define the \emph{expected value polynomial} to be $$\mathcal{E}\left(G;S,q\right)=\operatorname{\mathbb{E}} \left[ |\operatorname{Obs}\left(G;S,q\right)| \right],$$ 
                the expected value of the random variable $|\operatorname{Obs}\left(G;S,q\right)|$.
            \end{defn}
            This polynomial will serve as a central tool for investigating fragile power domination.
            \begin{obs}\label{obs:calculateexpolbinomial}
                $\mathcal{E}\left(G;S,q\right)$, as a polynomial in $q$ with degree at most $|S|$, can be calculated via:
                \begin{align*}
                    \mathcal{E}\left(G;S,q\right) = \sum_{W \subseteq S}\big|\operatorname{Obs}\left(G;W\right)\big|q^{|S \setminus W|}(1-q)^{|W|}
                \end{align*}
            \end{obs}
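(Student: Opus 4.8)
The plan is to condition on the outcome of the sensor-failure step (Step 0) and apply the law of total expectation. First I would fix an enumeration $s_1, \dots, s_m$ of the elements of $S$ counted with multiplicity, so $m = |S|$; by the definition of the fragile process, each $s_i$ is independently added to $S^*$ with probability $1-q$. Thus the random (sub)multiset $S^*$ is completely determined by which of these $m$ independent Bernoulli trials succeed, and for a fixed submultiset $W \subseteq S$ obtained by keeping a specified subcollection of the $s_i$, we have
\[
\operatorname{Prob}\big[S^* = W\big] = (1-q)^{|W|}\, q^{|S \setminus W|},
\]
since the $|W|$ corresponding trials must succeed, the remaining $|S \setminus W| = m - |W|$ must fail, and all trials are independent. (As a consistency check, summing these weights over all outcomes equals $1$ by \cref{fact1}.)

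Next I would observe that, conditioned on the event $\{S^* = W\}$, Steps 1 and 2 of the fragile power domination process coincide verbatim with Steps 1 and 2 of the ordinary power domination process on $G$ with initial set $W$, and those steps are deterministic. Hence on this event $|\operatorname{Obs}\left(G;S,q\right)| = |\operatorname{Obs}\left(G;W\right)|$ with certainty, so $\operatorname{\mathbb{E}}\left[\,|\operatorname{Obs}\left(G;S,q\right)| : S^* = W\,\right] = |\operatorname{Obs}\left(G;W\right)|$. The law of total expectation, summing over all $2^{|S|}$ outcomes of the failure step, then yields
\[
\mathcal{E}\left(G;S,q\right) = \sum_{W \subseteq S} \operatorname{Prob}\big[S^* = W\big]\,\big|\operatorname{Obs}\left(G;W\right)\big| = \sum_{W \subseteq S} \big|\operatorname{Obs}\left(G;W\right)\big|\, q^{|S \setminus W|}(1-q)^{|W|},
\]
which is exactly the claimed identity. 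When $S$ is a multiset, the sum is to be read as ranging over the $2^{|S|}$ sub-selections of $s_1,\dots,s_m$ rather than over distinct multisets; this causes no issue because $|\operatorname{Obs}\left(G;W\right)|$ depends only on $W$ as a multiset, so equal-$W$ terms simply aggregate.

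For the degree bound, note the right-hand side is a finite sum in which each term $q^{|S \setminus W|}(1-q)^{|W|}$ expands to a polynomial in $q$ of degree $|S \setminus W| + |W| = |S|$; therefore $\mathcal{E}\left(G;S,q\right)$ is a polynomial of degree at most $|S|$. Setting $q = 0$ kills every term except $W = S$ and recovers $\mathcal{E}\left(G;S,0\right) = |\operatorname{Obs}\left(G;S\right)|$, matching the earlier remark that $\operatorname{Obs}\left(G;S\right) = \operatorname{Obs}\left(G;S,0\right)$. I do not anticipate a real obstacle: the result is a direct application of total expectation once the failure step's sample space is described correctly. The only point requiring care is the bookkeeping in the multiset case—ensuring that ``$W \subseteq S$'' indexes the independent coin flips and that the weight $(1-q)^{|W|}q^{|S\setminus W|}$ is assigned consistently—which is handled cleanly by fixing the enumeration $s_1,\dots,s_m$ at the outset.
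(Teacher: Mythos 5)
Your proof is correct and is exactly the standard conditioning argument (law of total expectation over the outcomes of the sensor-failure step) that the paper implicitly relies on when stating this as an observation without proof. Your additional remarks on the multiset bookkeeping and the degree bound are accurate and consistent with how the paper uses the formula later.
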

            In the next example we explore $\mathcal{E}\left(G;S,q\right)$ for all $2$-multisets of a specific graph.
            \begin{ex}\label{Example:jspectrumexample}
                Let $G$ be the graph $G$ in \cref{fig:spectrumplot}.
                Note that $\gamma_P\left(G\right)=2$.
                The 28 unique placements of 2 PMUs, corresponding to the 28 multisets of 2 vertices, result in 10 distinct expected value functions, which are plotted in
                \cref{fig:spectrumplot}:
                \begin{itemize}
                    \item 
                        $\mathcal{E}\left(G;S,q\right) = 4(1-q)q + 2(1-q)^2$ when $S=\{2,2\}, \{3,3\}, \{6,6\}$ or $ \{7,7\}$
                    \item 
                        $\mathcal{E}\left(G;S,q\right) = 4(1-q)q + 3(1-q)^2$ when $S=\{2,3\}$ or $ \{6,7\}$
                    \item 
                        $\mathcal{E}\left(G;S,q\right) = 4(1-q)q + 4(1-q)^2$ when $S=\{2,6\}, \{2,7\}, \{3,6\},$ or $ \{3,7\}$
                    \item 
                        $\mathcal{E}\left(G;S,q\right) = 6(1-q)q + 3(1-q)^2$ when $S=\{1,1\}$
                    \item 
                        $\mathcal{E}\left(G;S,q\right) = 5(1-q)q + 5(1-q)^2$ when $S=\{1,2\}, \{1,3\}, \{1,6\},$ or $ \{1,7\}$
                    \item 
                        $\mathcal{E}\left(G;S,q\right) = 7(1-q)q + 5(1-q)^2$ when $S=\{2,4\}, \{3,4\}, \{5,6\},$ or $ \{5,7\}$
                    \item 
                        $\mathcal{E}\left(G;S,q\right) = 8(1-q)q + 5(1-q)^2$ when $S=\{1,4\}$ or $ \{1,5\}$
                    \item 
                        $\mathcal{E}\left(G;S,q\right) = 10(1-q)q + 5(1-q)^2$ when $S=\{4,4\}$ or $ \{5,5\}$
                    \item 
                        $\mathcal{E}\left(G;S,q\right) = 7(1-q)q + 7(1-q)^2$ when $S=\{2,5\}, \{3,5\}, \{4,6\},$ or $ \{4,7\}$
                    \item 
                        $\mathcal{E}\left(G;S,q\right) = 10(1-q)q + 7(1-q)^2$ when $S=\{4,5\}$.
                \end{itemize}
            \end{ex}
            \begin{figure}[ht]
                \centering
                \includegraphics{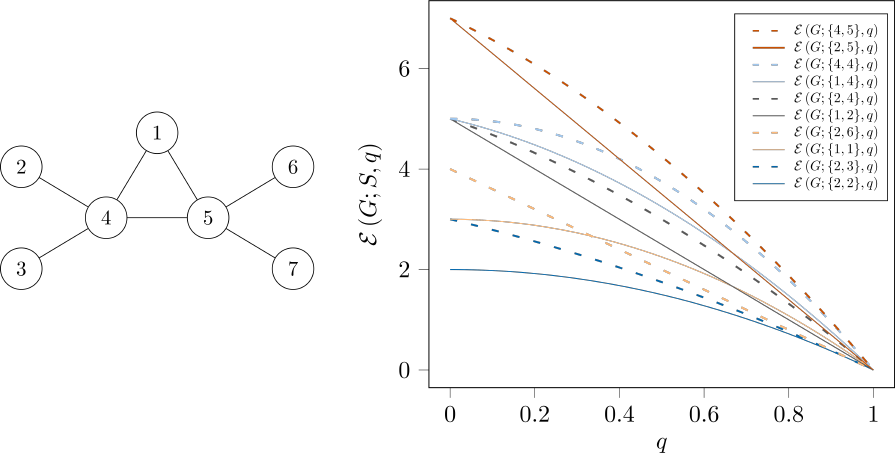}
                \caption{A graph $G$ and all distinct $\mathcal{E}\left(G;S,q\right)$ for 2-multisets $S$ containing vertices of $G$.}
                \label{fig:spectrumplot}
            \end{figure}
            Overall, the power dominating set $\{4,5\}$ is expected to observe more vertices than any other placement of two PMUs.
            The only other power dominating sets of size two are $\{2,5\}$, $\{3,5\}$, $\{4,6\}$, and $ \{4,7\}$, all of which have the same expected value polynomial.
            Notice that for $q\in\left(\nicefrac{2}{5},1\right)$ the placement of $\{4,4\}$, a failed power dominating set, is expected to observe more vertices than the placement of $\{2,5\}$, a power dominating set.
            We will explore the intersection of the expected value polynomials for power dominating sets and failed power dominating sets in \cref{sec:fishie}.
            
            \cref{Example:jspectrumexample} demonstrates $\mathcal{E}\left(G;S,q\right)$ for different PMU placements $S$ and a range of $q$ values.
            A natural baseline comparison for these polynomials is a placement of PMUs for which the observed vertices are roughly evenly distributed.
            That is, let $G$ be a graph on $n$ vertices and let $S$ be a power dominating set of $G$ such that, on average, $\frac{n}{\gamma_P\left(G\right)}$ vertices can be observed for each individual PMU in $S$.
            Then using \cref{fact2} we find
            \begin{align*}
                \mathcal{E}\left(G;S,q\right) &= \sum_{W\subseteq S} \left( |W| \, \frac{n}{\gamma_P\left(G\right)} \right) q^{\gamma_P\left(G\right)-|W|} (1-q)^{|W|} \\
                &= \frac{n}{\gamma_P\left(G\right)} \sum_{|W|=0}^{\gamma_P\left(G\right)} |W| \binom{\gamma_P\left(G\right)}{|W|} q^{\gamma_P\left(G\right)-|W|} (1-q)^{|W|} \\
                &= \frac{n}{\gamma_P\left(G\right)} \, \gamma_P\left(G\right) (1-q) \\
                &= n(1-q).
            \end{align*}
            In this sense, $\mathcal{E}\left(G;S,q\right)$ being linear in $q$ corresponds to a roughly average PMU covering of $G$.
            This notion is formalized in \cref{prop:LinearExp}.
    \section{Properties of the expected value polynomial}\label{sec:propertiesEGSq}
        As in power domination, fragile power domination on a disconnected graph $G$ can be observed as the sub-problems on the connected components of $G$.
        Particularly, given a graph $G$ with connected components $H_1,\ldots,H_k$ and initial set or multiset of vertices $S$, $\mathcal{E}\left(G;S,q\right)=\sum_{i=1}^{k}\mathcal{E}\left(H_i;V(H_i)\cap S,q\right)$.
        Thus, from now on, all graphs are assumed to be connected.
        \subsection{Linearity}
            With $\mathcal{E}\left(G;S,q\right)$ having degree at most $|S|$, it is trivial to show that when $|S|=1$ then $\mathcal{E}\left(G;S,q\right)$ is linear.
            \begin{obs}
                For a graph $G$ on $n$ vertices with $\gamma_P\left(G\right)\geq1$, a power dominating set or multiset $S$, and probability of PMU failure $q$, $\mathcal{E}\left(G;S,q\right)\geq n(1-q)^{|S|}$. 
                Equality holds when $|S|=1$, resulting in a linear $\mathcal{E}\left(G;S,q\right)$.
            \end{obs}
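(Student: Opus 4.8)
The plan is to expand $\mathcal{E}\left(G;S,q\right)$ with \cref{obs:calculateexpolbinomial} and then argue termwise, isolating the single summand that already accounts for the whole claimed lower bound. Write
\[
\mathcal{E}\left(G;S,q\right) = \sum_{W \subseteq S} \big|\operatorname{Obs}\left(G;W\right)\big|\, q^{|S \setminus W|}(1-q)^{|W|}.
\]
Since $q\in[0,1]$, each factor $q^{|S\setminus W|}(1-q)^{|W|}$ is nonnegative, and $\big|\operatorname{Obs}\left(G;W\right)\big|\geq 0$ is immediate, so every summand is nonnegative. Now isolate the term $W=S$: because $S$ is a power dominating set, $\operatorname{Obs}\left(G;S\right)=V(G)$, hence $\big|\operatorname{Obs}\left(G;S\right)\big|=n$ and that term equals $n\,q^{0}(1-q)^{|S|}=n(1-q)^{|S|}$. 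Discarding the remaining nonnegative summands yields $\mathcal{E}\left(G;S,q\right)\geq n(1-q)^{|S|}$.

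For the equality assertion, suppose $|S|=1$, say $S=\{v\}$. Then the only subsets of $S$ are $\varnothing$ and $S$ itself. Running the fragile power domination process with initial set $\varnothing$ gives $B=\varnothing$ after the domination step, and the zero forcing step performs no forces, so $\operatorname{Obs}\left(G;\varnothing\right)=\varnothing$ and the $W=\varnothing$ term vanishes. Consequently $\mathcal{E}\left(G;\{v\},q\right)=n(1-q)$, which equals $n(1-q)^{|S|}$ and is linear in $q$.

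There is essentially no hard step here: the statement is a direct consequence of \cref{obs:calculateexpolbinomial} together with the defining property of a power dominating set. The only points that warrant (minor) care are verifying that the empty initial placement observes nothing, and observing that the inequality is in fact strict on $(0,1)$ whenever $|S|\geq 2$ — any singleton $W=\{u\}\subseteq S$ has $\big|\operatorname{Obs}\left(G;\{u\}\right)\big|\geq |N[u]|\geq 1$, so at least one discarded term is positive for $q\in(0,1)$. Thus $|S|=1$ is also necessary for equality, although the observation only claims sufficiency.
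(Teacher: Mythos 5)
Your proposal is correct and follows essentially the same route as the paper: expand $\mathcal{E}\left(G;S,q\right)$ via \cref{obs:calculateexpolbinomial}, note the $W=S$ term equals $n(1-q)^{|S|}$ because $S$ is a power dominating set, and discard the remaining nonnegative terms; the $|S|=1$ equality case via $\operatorname{Obs}\left(G;\varnothing\right)=\varnothing$ matches as well. Your added remark that equality fails for $|S|\geq 2$ on $q\in(0,1)$ is a small correct strengthening not claimed in the paper.
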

            To see this, notice 
            \begin{align*}
                \mathcal{E}\left(G;S,q\right) &= \sum_{W\subseteq S}\big|\operatorname{Obs}\left(G;W\right)\big|q^{|S\setminus W|}(1-q)^{|W|}\\
                &= n(1-q)^{|S|}+\sum_{W \subsetneq S}q^{|S\setminus W|}(1-q)^{|W|}.
            \end{align*}
            The following yields another condition for when $\mathcal{E}\left(G;S,q\right)$ is linear.
            \begin{prop}\label{prop:LinearExp}
                Let $G$ be a graph, $S\subseteq V(G)$ be a set, and $q$ be a probability of PMU failure.
                If for every $X\subseteq S$,
                \[|\operatorname{Obs}\left(G;X\right)|=\sum_{v\in X} |\operatorname{Obs}\left(G;\{v\}\right)|,\]
                then $\mathcal{E}\left(G;S,q\right)=|\operatorname{Obs}\left(G;S\right)|(1-q).$
            \end{prop}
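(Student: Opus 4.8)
The plan is to start from the binomial expansion of $\mathcal{E}\left(G;S,q\right)$ given in \cref{obs:calculateexpolbinomial}, feed in the hypothesis to replace each term $|\operatorname{Obs}\left(G;W\right)|$ by a sum over individual vertices, and then interchange the order of summation so that the outer sum ranges over $v\in S$. Concretely, I would write
\begin{align*}
    \mathcal{E}\left(G;S,q\right)
    &= \sum_{W\subseteq S}\big|\operatorname{Obs}\left(G;W\right)\big|\,q^{|S\setminus W|}(1-q)^{|W|}
    = \sum_{W\subseteq S}\Big(\sum_{v\in W}\big|\operatorname{Obs}\left(G;\{v\}\right)\big|\Big)q^{|S\setminus W|}(1-q)^{|W|}\\
    &= \sum_{v\in S}\big|\operatorname{Obs}\left(G;\{v\}\right)\big|\sum_{\substack{W\subseteq S\\ v\in W}}q^{|S\setminus W|}(1-q)^{|W|},
\end{align*}
using the hypothesis applied to $X=W$ in the second equality.

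The next step is to evaluate the inner sum for a fixed $v\in S$. Writing $s=|S|$ and parametrizing each $W$ with $v\in W$ as $W=\{v\}\cup W'$ with $W'\subseteq S\setminus\{v\}$, we have $|W|=|W'|+1$ and $|S\setminus W|=s-1-|W'|$, so the inner sum becomes
\[
    (1-q)\sum_{W'\subseteq S\setminus\{v\}}q^{(s-1)-|W'|}(1-q)^{|W'|}
    = (1-q)\sum_{j=0}^{s-1}\binom{s-1}{j}q^{(s-1)-j}(1-q)^{j}
    = (1-q),
\]
where the last equality is \cref{fact1} with $n=s-1$. Substituting this back gives $\mathcal{E}\left(G;S,q\right)=(1-q)\sum_{v\in S}\big|\operatorname{Obs}\left(G;\{v\}\right)\big|$, and one final application of the hypothesis with $X=S$ collapses the sum to $|\operatorname{Obs}\left(G;S\right)|$, yielding the claim. (Alternatively, the inner computation can be organized by first grouping the subsets $W$ by their cardinality, which turns the whole expression into the identity of \cref{fact2}, $\sum_{k}k\binom{s}{k}q^{s-k}(1-q)^k=s(1-q)$, mirroring the motivating calculation preceding the statement; either route works.)

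I do not expect a genuine obstacle here: the proof is essentially a double-counting rearrangement. The only point requiring care is the re-indexing of the subsets of $S$ that contain a fixed vertex $v$ and the verification that the exponents of $q$ and $1-q$ transform correctly so that \cref{fact1} (or \cref{fact2}) applies cleanly; everything else is bookkeeping. It is also worth noting that the hypothesis is used twice — once at the level of arbitrary $W\subseteq S$ to linearize inside the sum, and once at $X=S$ to recognize the leading coefficient — so I would make sure the statement is invoked in both places.
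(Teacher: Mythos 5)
Your proof is correct, but it takes a genuinely different route from the paper's. The paper argues by induction on $|S|$: it fixes $v\in S$, sets $Y=S\setminus\{v\}$, splits the defining sum according to whether the surviving set contains $v$, uses the hypothesis once to peel off $|\operatorname{Obs}\left(G;\{v\}\right)|$, and then invokes the inductive hypothesis on $Y$ together with \cref{fact1}. You instead linearize every term at once via the hypothesis applied to each $W\subseteq S$ (harmless even for $W=\varnothing$, where both sides are $0$), interchange the order of summation, and evaluate the inner sum over subsets containing a fixed $v$ by the re-indexing $W=\{v\}\cup W'$ and \cref{fact1}; a final application of the hypothesis at $X=S$ finishes. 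The two arguments use the same two ingredients (the additivity hypothesis and \cref{fact1}) and the same containing-$v$/not-containing-$v$ decomposition, but your version avoids induction entirely and is arguably more transparent as a double-counting identity; your parenthetical alternative via \cref{fact2} also goes through, since each $v$ lies in $\binom{|S|-1}{k-1}=\frac{k}{|S|}\binom{|S|}{k}$ subsets of size $k$, which recovers exactly the motivating calculation preceding the proposition. The paper's induction, for its part, yields as a byproduct the observation (recorded as the corollary that follows) about what happens when a single vertex is adjoined to a placement, which is less visible in your one-shot computation.
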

            \begin{proof}
                We show this by induction. Observe that $|S|=1$ is immediate. 
                
                Now suppose the claim holds for vertex sets of size $k-1$ and let $S\subseteq V(G)$ be of size $k$ such that \[\big|\operatorname{Obs}\left(G;X\right)\big|=\sum_{v\in X} \big|\operatorname{Obs}\left(G;\{v\}\right)\big|\]
                for every $X\subseteq S$.
                Fix $v\in S$ and let $Y=S\setminus \{v\}$.
                Then $\mathcal{E}\left(G;S,q\right)$ is equal to
                \begin{align*}
                    q\sum_{W\subseteq Y}\big|\operatorname{Obs}\left(G;W\right)\big|\,q^{|Y\setminus W|}(1-q)^{|W|}+(1-q)\sum_{W\subseteq Y}\big|\operatorname{Obs}\left(G;W\cup\{v\}\right)\big|\,q^{|Y\setminus W|}(1-q)^{|W|}
                \end{align*}
                and using the assumption on $S$ we can split $|\operatorname{Obs}\left(G;W\cup\{v\}\right)|=|\operatorname{Obs}\left(G;W\right)|+|\operatorname{Obs}\left(G;\{v\}\right)|$, resulting in
                \[\sum_{W\subseteq Y}\big|\operatorname{Obs}\left(G;W\right)\big|\,q^{|Y\setminus W|}(1-q)^{|W|}+(1-q)\sum_{W\subseteq Y}\big|\operatorname{Obs}\left(G;\{v\}\right)\big|\,q^{|Y\setminus W|}(1-q)^{|W|}\]
                by combining the $q$ and $1-q$ sums. This is now equal to
                \[\mathcal{E}\left(G;Y,q\right)+(1-q)\big|\operatorname{Obs}\left(G;\{v\}\right)\big|\sum_{i=0}^{|Y|}\binom{|Y|}{i}q^{|Y|-i}(1-q)^i.\]
                Then by the inductive hypothesis $\mathcal{E}\left(G;Y,q\right)=(1-q)|\operatorname{Obs}\left(G;S\setminus\{v\}\right)|$ and so applying \cref{fact1} yields
                \[(1-q)\big|\operatorname{Obs}\left(G;S\setminus\{v\}\right)\big|+(1-q)\big|\operatorname{Obs}\left(G;\{v\}\right)|
                =\big|\operatorname{Obs}\left(G;S\right)\big|(1-q).\qedhere\]
            \end{proof}
            
            We now construct a family of graphs for which the unique minimum power dominating set satisfies the assumption of \cref{prop:LinearExp}.
            Start with the complete multipartite graph $K_{r_1,r_2,\ldots,r_k}$ with $k\geq 2$ and $r_i \geq 2$ for all $i$, and vertex partitions $V_1,V_2,\ldots,V_k$.
            For each partition $V_i$, add a vertex $a_i$ adjacent to all vertices in $V_i$. 
            Then for each $a_i$ add two adjacent leaves.
            If desired, subdivide any edge incident to $a_i$ any number of times.
            The set $\{a_1,a_2,\ldots,a_k\}$ is the unique minimum power dominating set and satisfies the hypothesis of \cref{prop:LinearExp}.
            The family of graphs constructed from $K_{2,2}$ is shown in \cref{fig:LinearConstruct}.
            \begin{figure}[H]
                \centering
                \includegraphics{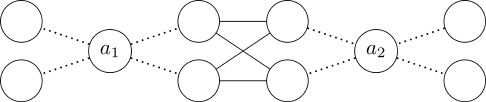}
                \caption{The family of graphs built from $K_{2,2}$ where the minimum power dominating set $\{a_1,a_2\}$ satisfies \cref{prop:LinearExp}. The dotted lines indicate edges that may be subdivided.}
                \label{fig:LinearConstruct}
            \end{figure}
            
            A consequence of the proof of \cref{prop:LinearExp} is a condition for when the degree of the expected value polynomial does not change after a vertex is added to a given PMU placement.
            \begin{cor}
                Let $G$ be a graph and let $q$ be a probability of PMU failure.
                Let $S$ be a set or multiset of vertices and $v\in V(G)\setminus S$.
                If for all $W\subseteq S$, $|\operatorname{Obs}\left(G;W\cup\{v\}\right)|=|\operatorname{Obs}\left(G;W\right)|+|\operatorname{Obs}\left(G;\{v\}\right)|$, then $\mathcal{E}\left(G;S,q\right)$ and $\mathcal{E}\left(G;S\cup\{v\},q\right)$ are the same degree. 
            \end{cor}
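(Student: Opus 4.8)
The plan is to read off an exact relationship between $\mathcal{E}(G;S\cup\{v\},q)$ and $\mathcal{E}(G;S,q)$ from a computation already present in the proof of \cref{prop:LinearExp}, and then perform a short degree comparison. First I would observe that the manipulation in that proof, up to the point where the inductive hypothesis is invoked, does not use the full linearity assumption: conditioning the fragile process on whether the single PMU at $v$ survives and applying \cref{obs:calculateexpolbinomial} expresses $\mathcal{E}(G;S\cup\{v\},q)$ as $q\,\mathcal{E}(G;S,q)$ plus $(1-q)\sum_{W\subseteq S}|\operatorname{Obs}(G;W\cup\{v\})|\,q^{|S\setminus W|}(1-q)^{|W|}$, using $v\notin S$. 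The corollary's hypothesis then permits the substitution $|\operatorname{Obs}(G;W\cup\{v\})|=|\operatorname{Obs}(G;W)|+|\operatorname{Obs}(G;\{v\})|$ for every $W\subseteq S$; recombining the two resulting copies of $\mathcal{E}(G;S,q)$ through $q\,\mathcal{E}(G;S,q)+(1-q)\,\mathcal{E}(G;S,q)=\mathcal{E}(G;S,q)$ and collapsing $\sum_{W\subseteq S}q^{|S\setminus W|}(1-q)^{|W|}=1$ via \cref{fact1} yields the key identity
\[\mathcal{E}(G;S\cup\{v\},q)=\mathcal{E}(G;S,q)+(1-q)\,\big|\operatorname{Obs}(G;\{v\})\big|.\]

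From here the degree statement follows quickly. Since the placement $\{v\}$ dominates $N[v]$, we have $|\operatorname{Obs}(G;\{v\})|\geq|N[v]|\geq1$, so $(1-q)\,|\operatorname{Obs}(G;\{v\})|$ has degree exactly $1$. If $S=\varnothing$ then $\mathcal{E}(G;\varnothing,q)$ is the zero polynomial and the statement is vacuous as stated, so I would assume $S\neq\varnothing$. Then \cref{obs:calculateexpolbinomial} gives $\mathcal{E}(G;S,1)=|\operatorname{Obs}(G;\varnothing)|=0$ and $\mathcal{E}(G;S,0)=|\operatorname{Obs}(G;S)|\geq1$, so $q=1$ is a root of $\mathcal{E}(G;S,q)$ and I may factor $\mathcal{E}(G;S,q)=(1-q)P(q)$ with $P(0)=|\operatorname{Obs}(G;S)|\geq1$. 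The identity becomes $\mathcal{E}(G;S\cup\{v\},q)=(1-q)\big(P(q)+|\operatorname{Obs}(G;\{v\})|\big)$, and because $|\operatorname{Obs}(G;\{v\})|$ is a positive constant, $\deg\big(P(q)+|\operatorname{Obs}(G;\{v\})|\big)=\deg P(q)$ (clear when $\deg P\geq 1$; and when $\deg P=0$ both $P$ and $P+|\operatorname{Obs}(G;\{v\})|$ are nonzero constants). Hence $\deg\mathcal{E}(G;S\cup\{v\},q)=1+\deg P(q)=\deg\mathcal{E}(G;S,q)$.

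The main obstacle is nothing deep: it is just the careful bookkeeping needed to extract the identity in the first paragraph, i.e., checking that the submultiset sums recombine exactly as in the proof of \cref{prop:LinearExp} and that \cref{fact1} applies verbatim. The only other point to watch is that adding the positive constant $|\operatorname{Obs}(G;\{v\})|$ to $P(q)$ cannot kill its leading term, which could only happen if $P$ were the negative constant $-|\operatorname{Obs}(G;\{v\})|$, contradicting $P(0)\geq1$.
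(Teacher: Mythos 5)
Your proposal is correct and follows essentially the route the paper intends: it re-runs the conditioning argument from the proof of \cref{prop:LinearExp} to obtain $\mathcal{E}\left(G;S\cup\{v\},q\right)=\mathcal{E}\left(G;S,q\right)+(1-q)\big|\operatorname{Obs}\left(G;\{v\}\right)\big|$ and then compares degrees. Your explicit factoring of $(1-q)$ to rule out cancellation of the leading term, and your flagging of the degenerate case $S=\varnothing$, are details the paper leaves implicit but are handled correctly.
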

            A natural question to ask is whether, for all $q$, a power dominating set is better or worse than the linear expected value given by \cref{prop:LinearExp}. 
            When $q$ is large, it is important that each PMU individually observes as many vertices as possible.
            One way to formalize this idea is through the following definition.
            \begin{defn}\label{def:localcover}
                Let $G$ be a graph and let $S$ be a set or multiset of the vertices of $G$.
                Then we call $S$ a \emph{local cover of $G$} if for every $x \in V(G)$, there exists a vertex $v \in S$ such that $x \in \operatorname{Obs}\left(G;\{v\}\right)$.
            \end{defn}
            The notion of a local cover gives an alternative to \cref{prop:LinearExp}.
            \begin{prop}\label{prop:localcover}
                Let $G$ be a graph on $n$ vertices and let $q$ be a probability of PMU failure.
                If $S$ is a local cover of $G\big[\operatorname{Obs}\left(G;S\right)\big]$, then $\mathcal{E}\left(G;S,q\right)\geq |\operatorname{Obs}\left(G;S\right)|(1-q)$ for all $q$.
                In particular, if $S$ is a power dominating set then $\mathcal{E}\left(G;S,q\right)\geq n(1-q)$ for all $q$.
            \end{prop}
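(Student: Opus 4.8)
The plan is to reduce, via linearity of expectation, to a statement about individual vertices, and then to use the local‑cover hypothesis to guarantee that each observed vertex is already seen by a single PMU. Throughout write $T=\operatorname{Obs}\left(G;S\right)$ and $H=G\big[T\big]$. Since $|\operatorname{Obs}\left(G;S,q\right)|=\sum_{x\in V(G)}\mathbf 1\big[x\in\operatorname{Obs}\left(G;S,q\right)\big]$, linearity of expectation gives $\mathcal{E}\left(G;S,q\right)=\sum_{x\in V(G)}\operatorname{Prob}\big[x\in\operatorname{Obs}\left(G;S,q\right)\big]$. Every summand is nonnegative, so it suffices to control the contributions of the $|T|$ vertices of $T$: I will show $\operatorname{Prob}\big[x\in\operatorname{Obs}\left(G;S,q\right)\big]\ge 1-q$ for each $x\in T$, which immediately yields $\mathcal{E}\left(G;S,q\right)\ge|T|(1-q)=|\operatorname{Obs}\left(G;S\right)|(1-q)$.

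The per‑vertex bound will come from producing a single surviving PMU that already observes $x$. The key claim is that the local‑cover hypothesis forces the following: for every $x\in T$ there is $v\in S$ with $x\in\operatorname{Obs}\left(G;\{v\}\right)$. Granting this, monotonicity of the power domination process (enlarging the initial set can only enlarge the observed set) gives $x\in\operatorname{Obs}\left(G;W\right)$ for every $W\subseteq S$ with $v\in W$; since the PMU at $v$ survives with probability $1-q$ independently of the others, conditioning on its survival shows $x$ is observed with probability at least $1-q$. Equivalently and more uniformly, letting $S^{*}$ be the surviving placement and $C_x=\{v\in S:x\in\operatorname{Obs}\left(G;\{v\}\right)\}$, one has $\mathcal{E}\left(G;S,q\right)\ge\operatorname{\mathbb{E}}\big[\,\big|\bigcup_{v\in S^{*}}\operatorname{Obs}\left(G;\{v\}\right)\big|\,\big]=\sum_{x}\big(1-q^{|C_x|}\big)$, and the key claim says $C_x\neq\varnothing$ for $x\in T$, so each such term is at least $1-q$.

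The step I expect to be the main obstacle is exactly the key claim — deducing \emph{$x\in\operatorname{Obs}\left(G;\{v\}\right)$ for some $v\in S$} from \emph{$S$ is a local cover of $H=G\big[T\big]$}. This is not automatic, because observation inside the induced subgraph $H$ can strictly exceed observation inside $G$: a vertex of $T$ may have two or more neighbors outside $T$ (so it never performs a force in the process on $G$) yet have exactly one unobserved neighbor within $H$ (so it does force there), which makes $\operatorname{Obs}\left(H;\{v\}\right)$ properly contain $\operatorname{Obs}\left(G;\{v\}\right)$ in general, so the $H$‑witness $v$ cannot simply be reused. My approach is to argue along the order in which vertices become observed during the process on $G$ from $S$. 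Each $x\in T$ is either dominated by some $v\in S$, in which case $x\in N[v]\subseteq\operatorname{Obs}\left(G;\{v\}\right)$, or it enters $T$ through a force from some vertex $w$; and any vertex that ever performs a force must have all of its neighbors in $T$, so such a $w$ has the same neighborhood in $G$ and in $H$. Choosing $x$ minimal in this order among vertices not yet known to be single‑PMU covered, the forcer $w$ and all of $w$'s other neighbors are already single‑PMU covered, and combining this with the fact that $x$ is reachable from some $v\in S$ inside $H$ should force the desired conclusion, closing the induction. Making this traceback airtight — in particular ruling out that the $H$‑process reaches $x$ along a route with no counterpart in $G$ — is where the real work lies.

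Finally, for a power dominating set $S$ we have $\operatorname{Obs}\left(G;S\right)=V(G)$, so $H=G$ and the target inequality becomes $\mathcal{E}\left(G;S,q\right)\ge n(1-q)$; in this specialization the key claim reads that every vertex of $G$ lies in $\operatorname{Obs}\left(G;\{v\}\right)$ for some $v\in S$, and the same per‑vertex argument then applies verbatim.
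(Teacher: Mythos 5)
Your probabilistic core is correct and is essentially the paper's argument in different clothing: the paper expands $\mathcal{E}\left(G;S,q\right)$ by subset size, notes that each $x\in\operatorname{Obs}\left(G;S\right)$ is counted at least $\binom{|S|-1}{j-1}$ times among the size-$j$ subsets (namely by every $W$ containing its witness $v$), and then collapses the binomial sum via Fact~\ref{fact1} to $(1-q)$ --- which is exactly your $\operatorname{Prob}\big[v\in S^*\big]=1-q$ computed by hand. The one place you diverge is the ``key claim,'' which you flag as the real work and leave unfinished. The paper does not do that work: its proof opens with ``Assume that $\operatorname{Obs}\left(G;S\right)=\bigcup_{v\in S}\operatorname{Obs}\left(G;\{v\}\right)$,'' i.e., it reads the local-cover hypothesis as asserting that every vertex of $\operatorname{Obs}\left(G;S\right)$ lies in $\operatorname{Obs}\left(G;\{v\}\right)$ for some $v\in S$, with observation taken in $G$ itself. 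Under that reading your key claim \emph{is} the hypothesis and there is nothing to prove; the traceback induction is unnecessary. Your worry that $\operatorname{Obs}\left(H;\{v\}\right)$ can strictly exceed $\operatorname{Obs}\left(G;\{v\}\right)$ for $H=G\big[\operatorname{Obs}\left(G;S\right)\big]$ is a legitimate observation about how \cref{def:localcover} interacts with the induced subgraph in the statement, but the sketch you give to bridge it does not close as written --- the forcer $w$ and its other neighbors may each be covered by \emph{different} single PMUs, which gives you no single $v$ whose process reaches $x$ in $G$ --- so if you want to keep that interpretation you would need a genuinely new argument, not this induction. The cleanest fix is to adopt the paper's reading of the hypothesis, after which your linearity-of-expectation proof (and the specialization to power dominating sets, where $H=G$) is complete.
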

            \begin{proof}
                Assume that $\operatorname{Obs}\left(G;S\right)=\displaystyle\bigcup_{v\in S}^{} \operatorname{Obs}\left(G;\{v\}\right)$.
                We calculate $\mathcal{E}\left(G;S,q\right)$ as the following:
             \begin{align*}
                 \sum_{W\subseteq S}\big|\operatorname{Obs}\left(G;W\right)\big|\,q^{|S\setminus W|}(1-q)^{|W|}
                 =\sum_{j=1}^{|S|} q^{|S|-j}(1-q)^{j} \sum_{\substack{W\subseteq S:\\|W|=j}}|\operatorname{Obs}\left(G;W\right)|.
             \end{align*}
                Now let $x \in \operatorname{Obs}\left(G;S\right)$ and choose $v \in S$ such that $x \in \operatorname{Obs}\left(G;\{v\}\right)$.
                The number of subsets $W \subseteq S$ of size $j$ containing the vertex $v$ is $\binom{|S|-1}{j-1}$.
                The sum of all the observed vertices for any subset $W$ of size $j$ counts the vertex $x$ at least $\binom{|S|-1}{j-1}$ times and therefore for any $q$,
                \begin{align*}
                    \mathcal{E}\left(G;S,q\right)&\geq \sum_{j=1}^{|S|} |\operatorname{Obs}\left(G;S\right)| \binom{|S|-1}{j-1}q^{|S|-j}(1-q)^{j}\\
                    &= |\operatorname{Obs}\left(G;S\right)|(1-q) \sum_{j=1}^{|S|} \binom{|S|-1}{j-1}q^{(|S|-1)-(j-1)}(1-q)^{j-1}\\
                    &= |\operatorname{Obs}\left(G;S\right)|(1-q) \sum_{k=0}^{|S|-1}\binom{|S|-1}{k}q^{(|S|-1)-k}(1-q)^{k}
                \end{align*}
                which is equal to $|\operatorname{Obs}\left(G;S\right)|(1-q)$ by \cref{fact1}. 
            \end{proof}
            The following is an immediate corollary for dominating sets.
            \begin{cor}
                For a graph $G$ on $n$ vertices, dominating set $D$ of $G$, and probability of PMU failure $q$, $\mathcal{E}\left(G;D,q\right) \geq n(1-q)$ for all $q$. 
            \end{cor}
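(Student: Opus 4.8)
The plan is to verify that a dominating set $D$ of $G$ satisfies the hypotheses of \cref{prop:localcover} with $\operatorname{Obs}\left(G;D\right)=V(G)$, after which the stated bound is just the ``in particular'' clause of that proposition.

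First I would observe that every dominating set is a power dominating set: with initial set $D$, the domination step already yields $B=\bigcup_{v\in D}N[v]=V(G)$, so $\operatorname{Obs}\left(G;D\right)=V(G)$ and hence $G\big[\operatorname{Obs}\left(G;D\right)\big]=G$. Thus it remains only to check that $D$ is a local cover of $G$ in the sense of \cref{def:localcover}. To that end, fix $x\in V(G)$. Since $D$ is dominating, there is some $v\in D$ with $x\in N[v]$. Running power domination from the singleton $\{v\}$, the domination step gives $B=N[v]$, and the zero forcing step only enlarges $B$, so $N[v]\subseteq\operatorname{Obs}\left(G;\{v\}\right)$ and therefore $x\in\operatorname{Obs}\left(G;\{v\}\right)$. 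As $x$ was arbitrary, $D$ is a local cover of $G=G\big[\operatorname{Obs}\left(G;D\right)\big]$.

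Finally I would apply \cref{prop:localcover} directly to conclude $\mathcal{E}\left(G;D,q\right)\geq|\operatorname{Obs}\left(G;D\right)|(1-q)=n(1-q)$ for all $q$. There is no genuine obstacle here; the only thing worth spelling out is the identification of the domination step of the power domination process with ordinary graph domination, which simultaneously gives $\operatorname{Obs}\left(G;D\right)=V(G)$ and the local cover property (each vertex is observed by a single PMU through its closed neighborhood).
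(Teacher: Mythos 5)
Your proposal is correct and matches the paper's intent exactly: the paper states this as an ``immediate corollary'' of \cref{prop:localcover}, and your argument simply spells out the verification that a dominating set $D$ has $\operatorname{Obs}\left(G;D\right)=V(G)$ and is a local cover because each vertex lies in $N[v]\subseteq\operatorname{Obs}\left(G;\{v\}\right)$ for some $v\in D$. No gaps; this is the same route, made explicit.
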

            Notice that the converse of \cref{prop:localcover} is not true.
            This can be seen by the graph $G$ in \cref{fig:balloondog}.
            For this graph, 
            \[ \mathcal{E}\left(G;\{a,b,c\},q\right) = 16(1-q)^3+40q(1-q)^2+23q^2(1-q). \]
            For all $q\in[0,1]$, $\mathcal{E}\left(G;\{a,b,c\},q\right) \geq 16(1-q)$, but $\{a,b,c\}$ is not a local cover of $G$ as $x$ is not observed by any individual vertex in $\{a,b,c\}$.
            Moreover, being a $k$-rPDS for $k \geq 1$ does not guarantee being a local cover.
            For example, the set of leaves of $G$ is a $1$-rPDS because a set consisting of any six of the seven leaves forms a power dominating set, but vertex $z$ is not in $\operatorname{Obs}\left(G;\{v\}\right)$ for any leaf $v$ of $G$.
            In fact, the only minimum local covers of $G$ are $\{a, b, c, x\}$ and $\{a, b, c, y\}$.
            \begin{figure}[ht]
                \centering
                \includegraphics{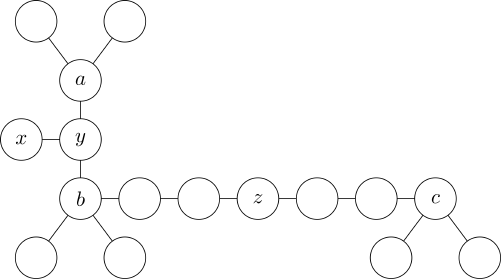}
                \caption{A graph $G$ with a minimum power dominating set $S=\{a, b, c\}$ for which $\mathcal{E}\left(G;S,q\right) \geq n(1-q)$ for $q\in[0,1]$, but $S$ is not a local cover.}
                \label{fig:balloondog}
            \end{figure}
        \subsection{Coefficients}
            In this section, we will explore the relationship between the structure of the expected value polynomial and and properties of its fixed PMU placements.
            Specifically, we show a correspondence between the coefficients of $\mathcal{E}\left(G;S,q\right)$ and whether the PMU placement $S$ is PMU-defect-robust or fault-tolerant.
            We also show a connection to the failed power domination number.
            We begin with the following result which calculates the coefficients of $\mathcal{E}\left(G;S,q\right)$ in standard form.
            \begin{lem}\label{lem:ExpCoefficients}
                Let $G$ be a graph and $q$ a probability of PMU failure.
                Let $S$ be a set or multiset of vertices and $r\in\{0,\ldots,|S|\}$.
                Then the coefficient of $q^{|S|-r}$ in $\mathcal{E}\left(G;S,q\right)$ is
                \[\sum_{\substack{W\subseteq S:\\|W|\geq r}}(-1)^{|W|-r}\binom{|W|}{r}|\operatorname{Obs}\left(G;W\right)|.\]
            \end{lem}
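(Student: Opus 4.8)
The plan is to start from the binomial expression for $\mathcal{E}\left(G;S,q\right)$ given in \cref{obs:calculateexpolbinomial} and simply expand each factor $(1-q)^{|W|}$ via the binomial theorem, then collect like powers of $q$. Concretely, I would first rewrite
\[\mathcal{E}\left(G;S,q\right) = \sum_{W \subseteq S} |\operatorname{Obs}\left(G;W\right)| \, q^{|S|-|W|} (1-q)^{|W|},\]
using $|S \setminus W| = |S| - |W|$, and then substitute $(1-q)^{|W|} = \sum_{i=0}^{|W|} (-1)^i \binom{|W|}{i} q^i$, so that the total contribution of a fixed $W$ is $|\operatorname{Obs}\left(G;W\right)| \sum_{i=0}^{|W|} (-1)^i \binom{|W|}{i} q^{|S|-|W|+i}$.

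Next I would extract the coefficient of $q^{|S|-r}$. For a fixed $W$, the exponent $|S|-|W|+i$ equals $|S|-r$ exactly when $i = |W|-r$. This index is admissible, i.e.\ satisfies $0 \le i \le |W|$, precisely when $|W| \ge r$; when $|W| < r$ the set $W$ contributes nothing to the coefficient of $q^{|S|-r}$. Hence only subsets $W \subseteq S$ with $|W| \ge r$ survive, each contributing $(-1)^{|W|-r}\binom{|W|}{|W|-r}|\operatorname{Obs}\left(G;W\right)|$. Applying the symmetry $\binom{|W|}{|W|-r} = \binom{|W|}{r}$ and summing over all such $W$ yields
\[\sum_{\substack{W\subseteq S:\\|W|\geq r}}(-1)^{|W|-r}\binom{|W|}{r}|\operatorname{Obs}\left(G;W\right)|,\]
as claimed.

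Since the argument is just a reindexing of a double sum followed by a single binomial expansion, there is no substantive obstacle. The only place to be slightly careful is bookkeeping: one must confirm that, for each $W$, exactly one value of the expansion index $i$ feeds into the power $q^{|S|-r}$ and that the surviving constraint is exactly $|W| \ge r$, which matches the range in the statement. Invoking \cref{obs:calculateexpolbinomial} for the starting identity and the ordinary binomial theorem for the expansion completes the proof.
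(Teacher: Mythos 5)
Your proposal is correct and follows essentially the same route as the paper's proof: expand $(1-q)^{|W|}$ by the binomial theorem inside the formula from \cref{obs:calculateexpolbinomial}, observe that a fixed $W$ contributes to the coefficient of $q^{|S|-r}$ exactly when $i=|W|-r$ with $|W|\geq r$, and apply the symmetry $\binom{|W|}{|W|-r}=\binom{|W|}{r}$. Your bookkeeping of the admissible index range is, if anything, slightly more explicit than the paper's.
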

            \begin{proof}
                Let $G$ be a graph.
                Notice
                \begin{align*}
                    \mathcal{E}\left(G;S,q\right)&=\sum_{W\subseteq S}|\operatorname{Obs}\left(G;W\right)|q^{|S\setminus W|}(1-q)^{|W|}\\
                    &=\sum_{W\subseteq S}|\operatorname{Obs}\left(G;W\right)|q^{|S\setminus W|}\sum_{i=1}^{|W|}\binom{|W|}{i}(-q)^i\\
                    &=\sum_{W\subseteq S}\sum_{i=0}^{|W|}(-1)^i\binom{|W|}{i}|\operatorname{Obs}\left(G;W\right)|q^{|S\setminus W|+i}
                \end{align*}
                and so we contribute to the coefficient of $q^{|S|-r}$ whenever $|W|\geq r$ and $i=|W|-r$.
                Hence the coefficient of $q^{|S|-r}$ in $\mathcal{E}\left(G;S,q\right)$ is
                \[\sum_{\substack{W\subseteq S:\\|W|\geq r}}(-1)^{|W|-r}\binom{|W|}{r}|\operatorname{Obs}\left(G;W\right)|.\qedhere\]
            \end{proof}
            \cref{lem:ExpCoefficients} gives a complete characterization of $k$-PMU-defect-robust power dominating sets or multisets.
            \begin{thm}\label{thm:robustbyexpoly}
                Let $G$ be a graph on $n$ vertices, $q$ be a probability of PMU failure, and the set or multiset of vertices $B$ be an initial placement of PMUs with $|B|=b$.
                Then $B$ is a $k$-PMU-defect-robust power dominating set of $G$ if and only if $\mathcal{E}\left(G;B,q\right)$ has the form
                \[\mathcal{E}\left(G;B,q\right)=n-q^{k+1}h(q)\]
                for some polynomial $h(q)$.
            \end{thm}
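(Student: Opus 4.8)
My plan is to track the number of \emph{unobserved} vertices rather than observed ones. For $W \subseteq B$ write $d(W) := n - |\operatorname{Obs}\left(G;W\right)|$, so that $d(W) \ge 0$, with $d(W) = 0$ exactly when $W$ is a power dominating set, and $d$ is monotone: $W \subseteq W'$ forces $d(W) \ge d(W')$, since more surviving PMUs can only enlarge the observed set. The first step is to rephrase the hypothesis on $B$: using monotonicity, $B$ is a $k$-rPDS if and only if $d(W) = 0$ for every $W \subseteq B$ with $|B \setminus W| \le k$. Indeed, given such a $W$ with $|B\setminus W| = j \le k$, delete $k - j$ further elements of $W$ to obtain $W' \subseteq W$ with $|B\setminus W'| = k$; then $d(W) \le d(W') = 0$ by the defining property of a $k$-rPDS. (This uses $|B| \ge k$, which holds automatically in both directions of the theorem once $n \ge 1$, and the case $n = 0$ is vacuous.)

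The second step rewrites the polynomial. Starting from $\mathcal{E}\left(G;B,q\right) = \sum_{W \subseteq B} |\operatorname{Obs}\left(G;W\right)|\,q^{|B\setminus W|}(1-q)^{|W|}$ and subtracting $n$ times the identity of \cref{fact1}, namely $\sum_{W\subseteq B} q^{|B\setminus W|}(1-q)^{|W|} = 1$, gives
\[
n - \mathcal{E}\left(G;B,q\right) \;=\; \sum_{W\subseteq B} d(W)\, q^{|B\setminus W|}(1-q)^{|W|}.
\]
Here every coefficient $d(W)$ is a nonnegative integer, and when $q^{|B\setminus W|}(1-q)^{|W|}$ is expanded in powers of $q$ its lowest term is $q^{|B\setminus W|}$ with coefficient $+1$.

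For the forward implication, assume $B$ is a $k$-rPDS. By the reformulation, $d(W) \ne 0$ forces $|B \setminus W| > k$, so every summand on the right-hand side above is either zero or divisible by $q^{k+1}$; factoring $q^{k+1}$ out of the sum yields a polynomial $h(q)$ with $n - \mathcal{E}\left(G;B,q\right) = q^{k+1} h(q)$. For the converse I argue by contrapositive. If $B$ is not a $k$-rPDS, let $m := \min\{\,|B\setminus W| : W \subseteq B,\ d(W) > 0\,\}$, which by the reformulation satisfies $m \le k$. In the displayed sum, summands with $|B\setminus W| < m$ vanish because $d(W) = 0$ there, summands with $|B\setminus W| > m$ contribute only to powers $q^{m+1}$ and higher, and each summand with $|B\setminus W| = m$ contributes exactly $d(W) \ge 0$ to the coefficient of $q^m$, at least one strictly positively. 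Thus the coefficient of $q^m$ in $n - \mathcal{E}\left(G;B,q\right)$ is a positive integer with $m \le k$, so $n - \mathcal{E}\left(G;B,q\right)$ cannot be written as $q^{k+1} h(q)$ for any polynomial $h$.

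The one delicate point — the step I expect to need the most care — is the non-cancellation claim in the converse: a priori the coefficient of $q^m$ could pick up negative contributions from the $(1-q)^{|W|}$ factors. Choosing $m$ minimal is exactly what removes this danger, because it kills all terms of degree below $q^m$ while terms of degree above $q^m$ do not reach $q^m$ at all; the only contributions left are the leading $+d(W)q^m$ terms, and since every $d(W)$ is nonnegative the coefficient is a sum of nonnegative integers that is strictly positive. (An alternative is to read off the coefficients of $q^0,\dots,q^k$ directly from \cref{lem:ExpCoefficients} and collapse the resulting alternating binomial sums using \cref{fact3}, but the bookkeeping above is shorter and handles both directions uniformly.)
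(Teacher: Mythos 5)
Your proof is correct, and it takes a genuinely different route from the paper's. The paper works with the observed-vertex counts directly: it first derives an explicit formula for each coefficient of $\mathcal{E}\left(G;B,q\right)$ in standard form (\cref{lem:ExpCoefficients}), proves the forward direction by collapsing an alternating binomial sum via the identity $\binom{b}{i+b-w}\binom{i+b-w}{b-w}=\binom{w}{i}\binom{b}{w}$ together with \cref{fact3}, and proves the converse by induction on $k$, using an averaging argument: the vanishing of the coefficient of $q^k$ forces $\sum_{|A|=b-k}|\operatorname{Obs}\left(G;A\right)|=\binom{b}{k}n$, and since each summand is at most $n$ they must all equal $n$. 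You instead pass to the deficiency $d(W)=n-|\operatorname{Obs}\left(G;W\right)|$ and subtract the \cref{fact1} identity to get $n-\mathcal{E}\left(G;B,q\right)=\sum_{W\subseteq B}d(W)\,q^{|B\setminus W|}(1-q)^{|W|}$, after which both directions become statements about divisibility by $q^{k+1}$: the forward direction is immediate since every nonzero summand carries a factor $q^{|B\setminus W|}$ with $|B\setminus W|\geq k+1$, and the converse follows from your minimal-$m$ argument, which correctly isolates the coefficient of $q^m$ as $\sum_{|B\setminus W|=m}d(W)>0$ (terms below $m$ vanish by minimality, terms above $m$ cannot reach degree $m$, and each $(1-q)^{|W|}$ contributes constant term $+1$). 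This buys you a uniform, induction-free argument with no binomial bookkeeping, at the cost of not producing the explicit coefficient formula that the paper reuses elsewhere; your reliance on monotonicity of $\operatorname{Obs}$ to upgrade the $|F|=k$ condition to $|F|\leq k$ is sound and is the same fact the paper invokes in \cref{lem:Esubsetmonotone}. The only loose end is your parenthetical claim that $|B|\geq k$ "holds automatically" in the forward direction: under a strictly literal reading of \cref{def:gpk} the condition is vacuous when $|B|<k$, an edge case the paper's proof also silently excludes, so nothing is lost relative to the original.
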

            \begin{proof}
                First assume $B$ is a $k$-rPDS.
                This means any subset $A\subseteq B$ of cardinality $|A|\geq b-k$ is a power dominating set, that is $\operatorname{Obs}\left(G;A\right)=V(G)$.
                Let $C_w$ denote the coefficient of $q^w$ in $\mathcal{E}\left(G;B,q\right)$ as given by \cref{lem:ExpCoefficients}.
                Then notice for any $1\leq w\leq k$,
                \begin{align*}
                    C_w=\sum_{\substack{A\subseteq B:\\|A|\geq b-w}}(-1)^{|A|-(b-w)}\binom{|A|}{b-w}|\operatorname{Obs}\left(G;A\right)|
                    &=n\sum_{\substack{A\subseteq B:\\|A|\geq b-w}}(-1)^{|A|-(b-w)}\binom{|A|}{b-w}\\
                    &=n\sum_{j=b-w}^b (-1)^{j-(b-w)}\binom{b}{j}\binom{j}{b-w}\\
                    &=n\sum_{i=0}^w (-1)^{i}\binom{b}{i+b-w}\binom{i+b-w}{b-w}\\
                    &=n\binom{b}{w}\sum_{i=0}^w (-1)^{i}\binom{w}{i}\\
                    &=0
                \end{align*}
                since $\operatorname{Obs}\left(G;A\right)=V(G)$ for all $A\subseteq B$ with $|A|\geq b-w\geq b-k$ and
                \begin{align*}
                    \binom{b}{i+b-w}\binom{i+b-w}{b-w}
                    =\frac{b!}{(w-i)!\,(b-w)!\,i!}
                    =\binom{w}{i}\binom{b}{w}.
                \end{align*}
                Thus
                \begin{align*}
                    \mathcal{E}\left(G;B,q\right)&=C_0+C_{k+1}\,q^{k+1}+C_{k+2}\,q^{k+2}+\cdots+C_b\, q^b\\
                    &=n-q^{k+1}\big(-C_{k+1}-C_{k+2}\, q-\cdots-C_b\, q^{b-(k+1)}\big).
                \end{align*}
                Now let $B\subseteq V(G)$ with $|B|=b$ and suppose $\mathcal{E}\left(G;B,q\right)=n-q^{k+1} h(q)$ with $C_i$ denoting the coefficient of $q^i$.
                We prove $B$ is a $k$-rPDS by a recursive argument.
                For the base case, let $k=1$ with polynomial $\mathcal{E}\left(G;B,q\right)=n-q^2 h(q)$.
                Notice $C_0=n$ implies $|\operatorname{Obs}\left(G;B\right)|=n$ and so $B$ is a power dominating set.
                Moreover, by \cref{lem:ExpCoefficients} 
                \[0=C_1=\sum_{\substack{A\subseteq B:\\|A|\geq b-1}}(-1)^{|A|-(b-1)}\binom{|A|}{b-1}|\operatorname{Obs}\left(G;A\right)|=-bn+\sum_{\substack{A\subseteq B:\\|A|=b-1}}|\operatorname{Obs}\left(G;A\right)|\]
                which implies 
                \[\sum_{\substack{A\subseteq B:\\|A|=b-1}}|\operatorname{Obs}\left(G;A\right)|=bn.\]
                Since there are $b$ summands and $|\operatorname{Obs}\left(G;S\right)|\leq n$ for all $S\subseteq V(G)$, this can only happen if $|\operatorname{Obs}\left(G;A\right)|=n$ for all $A\subseteq B$ of cardinality $b-1$.
                In other words, $B$ is a $1$-rPDS. 
                
                Now let $\mathcal{E}\left(G;B,q\right)=n-q^{k+1} h(q)$ with $1<k\leq b$ and assume the claim holds for $t\leq k-1$.
                Then since $\mathcal{E}\left(G;B,q\right)=n-q^k g(q)$ for $g(q)=qh(q)$, $B$ is a $t$-rPDS for $0\leq t\leq k-1$.
                That is, $|\operatorname{Obs}\left(G;A\right)|=n$ for all $A\subseteq B$ such that $|A|\geq b-(k-1)$.
                Then notice that
                \begin{align*}
                    0=C_k&=\sum_{\substack{A\subseteq B:\\|A|\geq b-k}} (-1)^{|A|-(b-k)}\binom{|A|}{b-k}|\operatorname{Obs}\left(G;A\right)|\\
                    &=n\sum_{\substack{A\subseteq B:\\|A|\geq b-(k-1)}}(-1)^{|A|-(b-k)}\binom{|A|}{b-k}
                    + \sum_{\substack{A\subseteq B:\\|A|= b-k}} (-1)^{|A|-(b-k)}\binom{|A|}{b-k}|\operatorname{Obs}\left(G;A\right)|\\
                    &=n\sum_{i=1}^k (-1)^i \binom{b}{k}\binom{k}{i}
                    +\sum_{\substack{A\subseteq B:\\|A|= b-k}}|\operatorname{Obs}\left(G;A\right)|
                \end{align*}
                where in the last equality we use the same transformation as in the proof of the forward direction.
                Now it follows from \cref{fact3} that
                \[n\binom{n}{k}\sum_{i=0}^k (-1)^i\binom{k}{i}=0=n\sum_{i=1}^k (-1)^i \binom{b}{k}\binom{k}{i}
                +\sum_{\substack{A\subseteq B:\\|A|= b-k}}|\operatorname{Obs}\left(G;A\right)|\]
                and thus
                \[n\binom{b}{k}=\sum_{\substack{A\subseteq B:\\|A|= b-k}} |\operatorname{Obs}\left(G;A\right)|.\]
                Finally, because there are $\binom{b}{b-k}=\binom{b}{k}$ choices for subsets $A\subseteq B$ of size $b-k$, we have that each $A$ must observe $n$ vertices.
                Hence $B$ is a $k$-rPDS.
            \end{proof}
            Notice that a set which is $k$-fault-tolerant is also $k$-PMU-defect-robust, and so \cref{thm:robustbyexpoly} gives us the following corollary.
            
            \begin{cor}\label{thm:faultolerantbyexpoly}
                Let $G$ be a graph on $n$ vertices, $q$ be a probability of PMU failure, and the set $S\subseteq V(G)$ be an initial placement of PMUs.
                Then $S$ is a $k$-fault-tolerant power dominating set of $G$ if and only if $\mathcal{E}\left(G;S,q\right)$ has the form
                \[\mathcal{E}\left(G;S,q\right)=n-q^{k+1}h(q)\]
                for some polynomial $h(q)$.
            \end{cor}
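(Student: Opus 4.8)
The plan is to deduce \cref{thm:faultolerantbyexpoly} from \cref{thm:robustbyexpoly} by showing that, for a \emph{set} $S\subseteq V(G)$ (as opposed to a multiset), being a $k$-fault-tolerant power dominating set and being a $k$-PMU-defect-robust power dominating set are the same property. Once that equivalence is established, \cref{thm:robustbyexpoly} applied with $B=S$ and $b=|S|$ immediately gives both directions of the stated biconditional.

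First I would reformulate fault tolerance in a form matching \cref{def:gpk}. If $F\subset V(G)$ with $|F|\le k$, then $S\setminus F=S\setminus(F\cap S)$, so deleting vertices outside $S$ has no effect; hence $S$ is $k$-fault-tolerant exactly when $S\setminus F$ is a power dominating set for every $F\subseteq S$ with $|F|\le k$. Using the standard monotonicity of power domination — adding PMUs to an initial set never shrinks the observed set, i.e.\ $\operatorname{Obs}(G;A)\subseteq\operatorname{Obs}(G;A')$ whenever $A\subseteq A'$ — together with the observation that $|S\setminus F|$ ranges over $\{|S|-k,\dots,|S|\}$ as $F$ ranges over subsets of $S$ of size at most $k$, this condition is equivalent to: every $A\subseteq S$ with $|A|=|S|-k$ is a power dominating set of $G$.

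Next I would compare this with \cref{def:gpk}. For a set $S$, a submultiset $F$ with $|F|=k$ is just a $k$-element subset of $S$, so $S$ is a $k$-rPDS precisely when $S\setminus F$ contains a power dominating set for every $k$-subset $F\subseteq S$; since $S\setminus F$ has size $|S|-k$ and, by monotonicity, "$S\setminus F$ contains a power dominating set'' is equivalent to "$S\setminus F$ is a power dominating set,'' this again says exactly that every $A\subseteq S$ with $|A|=|S|-k$ is a power dominating set. This matches the reformulation of fault tolerance from the previous step, giving the equivalence. (Here one should record the mild hypotheses $0\le k<|V(G)|$ from the fault-tolerant definition and $k\le|S|$, which keep the $(|S|-k)$-subsets nonempty and both notions non-vacuous; these are exactly the regime in which \cref{thm:robustbyexpoly} is applied.)

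Finally, apply \cref{thm:robustbyexpoly}: since $S$ is $k$-fault-tolerant if and only if it is a $k$-PMU-defect-robust power dominating set, and the latter holds if and only if $\mathcal{E}\left(G;S,q\right)=n-q^{k+1}h(q)$ for some polynomial $h(q)$, the corollary follows. The only real subtlety — the step I would be most careful with — is the combinatorial bookkeeping in that equivalence: reconciling "$|F|\le k$'' in the fault-tolerant definition with "$|F|=k$'' in \cref{def:gpk}, and "contains a power dominating set'' with "is a power dominating set.'' Both are resolved cleanly by monotonicity, but each deserves an explicit sentence so that the reduction to \cref{thm:robustbyexpoly} is airtight.
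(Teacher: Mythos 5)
Your proposal is correct and follows essentially the same route as the paper, which deduces the corollary from \cref{thm:robustbyexpoly} via the observation that for a set $S\subseteq V(G)$ the fault-tolerant and PMU-defect-robust conditions coincide. The paper only remarks on the forward implication in one sentence, whereas you spell out both directions of that equivalence (handling $|F|\le k$ versus $|F|=k$ and ``contains'' versus ``is'' a power dominating set via monotonicity), which is exactly the bookkeeping needed to make the stated biconditional airtight.
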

            The largest set that can be used in $k$-fault-tolerant power domination is the entire vertex set.
            This means that $V(G)$ determines the largest possible $k$ for which $G$ can be $k$-fault-tolerant.
            We can determine this $k$ using the expected value polynomial.
            \begin{prop}\label{thm:faultolerantkbound}
                Let $G$ be a graph on $n$ vertices and let $q$ be a probability of PMU failure.
                Writing
                $\mathcal{E}\left(G;V(G),q\right)=n-q^{k+1}h(q)$
                for some polynomial $h(q)$ containing a nonzero constant term, it follows that $k$ is the largest possible $k$ for which $G$ can be $k$-fault-tolerant.
            \end{prop}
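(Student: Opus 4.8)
The plan is to reduce the statement to a direct application of \cref{thm:faultolerantbyexpoly} with the PMU placement $S=V(G)$. First I would record the structural fact that the largest $k$ for which $G$ admits a $k$-fault-tolerant power dominating set equals the largest $k$ for which $V(G)$ itself is a $k$-fault-tolerant power dominating set. Indeed, every $k$-fault-tolerant power dominating set is by definition a subset of $V(G)$, and $k$-fault-tolerance is inherited by supersets: if $S\subseteq T\subseteq V(G)$ and $S$ is $k$-fault-tolerant, then for any $F\subseteq V(G)$ with $|F|\leq k$ we have $T\setminus F\supseteq S\setminus F=S\setminus(F\cap S)$, and $S\setminus(F\cap S)$ is a power dominating set because $|F\cap S|\leq k$, so $T\setminus F$ (a superset of a power dominating set) is a power dominating set. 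Hence if any set is $k$-fault-tolerant, then so is $V(G)$.

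Next I would apply \cref{thm:faultolerantbyexpoly} to $S=V(G)$: for each integer $j\geq 0$, the set $V(G)$ is a $j$-fault-tolerant power dominating set if and only if $\mathcal{E}\left(G;V(G),q\right)=n-q^{j+1}g(q)$ for some polynomial $g(q)$, i.e.\ if and only if $q^{j+1}$ divides the polynomial $n-\mathcal{E}\left(G;V(G),q\right)$. I would then note that $n-\mathcal{E}\left(G;V(G),q\right)$ is not the zero polynomial: since $V(G)$ is a power dominating set, $\mathcal{E}\left(G;V(G),0\right)=|\operatorname{Obs}\left(G;V(G)\right)|=n$, whereas substituting $q=1$ into \cref{obs:calculateexpolbinomial} kills every term except the one for $W=\varnothing$, giving $\mathcal{E}\left(G;V(G),1\right)=|\operatorname{Obs}\left(G;\varnothing\right)|=0\neq n$. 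Consequently $n-\mathcal{E}\left(G;V(G),q\right)$ has a well-defined order of vanishing at $q=0$, and there is a unique way to write $n-\mathcal{E}\left(G;V(G),q\right)=q^{m}h(q)$ with $h(0)\neq 0$; by hypothesis this is the stated factorization, so $m=k+1$.

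Finally I would conclude. Since $h(0)\neq 0$, $q$ does not divide $h(q)$, so $q^{j+1}$ divides $q^{k+1}h(q)$ if and only if $j+1\leq k+1$, i.e.\ $j\leq k$. Thus $V(G)$ is $j$-fault-tolerant exactly for $j\leq k$; in particular it is $k$-fault-tolerant but not $(k+1)$-fault-tolerant. Combining with the first paragraph, $G$ admits a $k$-fault-tolerant power dominating set but no $(k+1)$-fault-tolerant one, which is precisely the claim.

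The step requiring the most care is the first paragraph: one must verify that $k$-fault-tolerance passes to supersets, so that $V(G)$ genuinely realizes the worst case, and one must check that $n-\mathcal{E}\left(G;V(G),q\right)$ is nonzero so that the exponent $k+1$ appearing in the hypothesis is unambiguously defined. Once these are in place, the remainder is a routine translation through \cref{thm:faultolerantbyexpoly}.
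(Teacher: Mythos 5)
Your proposal is correct and follows the same route the paper intends: the paper gives no formal proof, justifying the proposition only by the remark that $V(G)$ is the largest admissible placement (so it realizes the maximal $k$) together with \cref{thm:faultolerantbyexpoly}, which is exactly your argument. Your added checks---that fault-tolerance passes to supersets and that $n-\mathcal{E}\left(G;V(G),q\right)$ is a nonzero polynomial, so the exponent $k+1$ is well defined---are details the paper leaves implicit, and both are verified correctly.
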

            Notice that if the entire vertex set is at most $k$-PMU-defect-robust, then there must exist some failed power dominating set $F$ with $|F|=n-k-1$. Then by \cref{thm:robustbyexpoly}, we obtain a similar structural constraint on the expected value polynomial in terms of the failed power domination number.
            \begin{cor}
                For a graph $G$ on $n$ vertices such that $\mathcal{E}\left(G;V(G),q\right)$ has the form $n-q^{k+1}h(q)$ with $h(q)$ containing a nonzero constant term, we have $\overline{\gamma}_P\left(G\right)=n-k-1$.
            \end{cor}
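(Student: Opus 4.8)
The plan is to read off the exact PMU-defect-robustness level of the full vertex set $V(G)$ from the hypothesis and then translate that level into a statement about failed power dominating sets. First I would apply \cref{thm:robustbyexpoly} with $B=V(G)$ and $b=n$: since $\mathcal{E}\left(G;V(G),q\right)=n-q^{k+1}h(q)$, the vertex set $V(G)$ is a $k$-PMU-defect-robust power dominating set. Next I would argue that $V(G)$ is \emph{not} a $(k+1)$-rPDS. Indeed, if it were, then \cref{thm:robustbyexpoly} would give $\mathcal{E}\left(G;V(G),q\right)=n-q^{k+2}g(q)$ for some polynomial $g(q)$, forcing the coefficient of $q^{k+1}$ in $\mathcal{E}\left(G;V(G),q\right)$ to be zero; but writing $h(q)=h_0+h_1q+\cdots$, that coefficient equals $-h_0=-h(0)$, which is nonzero by assumption, a contradiction.

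For the upper bound $\overline{\gamma}_P\left(G\right)\leq n-k-1$: because $V(G)$ is $k$-rPDS and $V(G)$ is an ordinary set, removing any $k$ of its vertices leaves a power dominating set, so every subset of $V(G)$ of size at least $n-k$ is a power dominating set. Hence any set $F$ with $\operatorname{Obs}\left(G;F\right)\neq V(G)$ must have $|F|\leq n-k-1$. For the lower bound $\overline{\gamma}_P\left(G\right)\geq n-k-1$: since $V(G)$ is not a $(k+1)$-rPDS, there is a set $F\subseteq V(G)$ with $|F|=k+1$ such that $V(G)\setminus F$ does not contain a power dominating set. Here I would invoke monotonicity of observation, namely that $S\subseteq S'$ implies $\operatorname{Obs}\left(G;S\right)\subseteq\operatorname{Obs}\left(G;S'\right)$, to conclude that ``$V(G)\setminus F$ contains a power dominating set'' is equivalent to ``$V(G)\setminus F$ is a power dominating set''; thus $\operatorname{Obs}\left(G;V(G)\setminus F\right)\neq V(G)$. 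Since $|V(G)\setminus F|=n-k-1$, this exhibits a failed power dominating set of that size. Combining the two bounds gives $\overline{\gamma}_P\left(G\right)=n-k-1$.

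The main point requiring care — rather than genuine difficulty — is the step showing that the hypothesis on $h(q)$ forces $V(G)$ to fail to be $(k+1)$-robust, so that $k$ is precisely the robustness level and not merely a lower bound on it; this is exactly where the coefficient formula of \cref{lem:ExpCoefficients}, as packaged in \cref{thm:robustbyexpoly}, does the work. The monotonicity remark should also be stated explicitly, since \cref{def:gpk} is phrased in terms of a residual set \emph{containing} a power dominating set rather than \emph{being} one.
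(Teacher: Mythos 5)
Your proposal is correct and follows essentially the same route the paper intends: the paper derives this corollary directly from \cref{thm:robustbyexpoly} via the remark that the hypothesis pins down the exact robustness level of $V(G)$ (it is $k$-rPDS but, because $h(0)\neq 0$, not $(k+1)$-rPDS), which yields a failed power dominating set of size exactly $n-k-1$ and no larger. Your extra care about ``contains a power dominating set'' versus ``is a power dominating set'' is a correct and harmless elaboration of what the paper leaves implicit.
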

        \subsection{Comparisons}\label{sec:fishie}
            Now we will use the expected value polynomial as a tool to compare PMU placements.
            We examine what happens when a PMU is added to an existing placement.
            Then we compare failed power dominating sets to power dominating sets of the same size.
            \begin{lem}\label{lem:Esubsetmonotone}
                Let $G$ be a graph, $S$ a set or multiset of vertices of $G$, and $q$ a probability of PMU failure.
                Given a vertex $v \in V(G)$, it holds that $\mathcal{E}\left(G;S,q\right) \leq \mathcal{E}\left(G;S\cup\{v\},q\right)$.
            \end{lem}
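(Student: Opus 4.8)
The plan is to expand $\mathcal{E}\left(G;S\cup\{v\},q\right)$ via \cref{obs:calculateexpolbinomial} and split the sum over submultisets of $S\cup\{v\}$ according to whether they contain the newly appended copy of $v$. Write $S'=S\cup\{v\}$ and regard the appended $v$ as a distinguished element, so that every $W\subseteq S'$ is either a submultiset of $S$ or of the form $W'\cup\{v\}$ for a unique $W'\subseteq S$. Using $|S'\setminus W|=|S\setminus W|+1$ in the first case and $|S'\setminus(W'\cup\{v\})|=|S\setminus W'|$, $|W'\cup\{v\}|=|W'|+1$ in the second case, I would obtain
\begin{align*}
    \mathcal{E}\left(G;S',q\right)
    &= \sum_{W\subseteq S}|\operatorname{Obs}\left(G;W\right)|\,q^{|S\setminus W|+1}(1-q)^{|W|}
    + \sum_{W'\subseteq S}|\operatorname{Obs}\left(G;W'\cup\{v\}\right)|\,q^{|S\setminus W'|}(1-q)^{|W'|+1}\\
    &= q\,\mathcal{E}\left(G;S,q\right)
    + (1-q)\sum_{W'\subseteq S}|\operatorname{Obs}\left(G;W'\cup\{v\}\right)|\,q^{|S\setminus W'|}(1-q)^{|W'|}.
\end{align*}

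The next step is to invoke monotonicity of the power domination process: adding a vertex to the initial set cannot shrink the observed set, so $|\operatorname{Obs}\left(G;W'\cup\{v\}\right)|\geq|\operatorname{Obs}\left(G;W'\right)|$ for every $W'\subseteq S$. Since $q\in[0,1]$, every monomial $q^{|S\setminus W'|}(1-q)^{|W'|}$ is nonnegative, so the remaining sum is at least $\sum_{W'\subseteq S}|\operatorname{Obs}\left(G;W'\right)|\,q^{|S\setminus W'|}(1-q)^{|W'|}=\mathcal{E}\left(G;S,q\right)$. Substituting gives
\[
    \mathcal{E}\left(G;S\cup\{v\},q\right)\geq q\,\mathcal{E}\left(G;S,q\right)+(1-q)\,\mathcal{E}\left(G;S,q\right)=\mathcal{E}\left(G;S,q\right),
\]
which is the claim.

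There is no genuinely hard step here; the only points requiring care are the multiset bookkeeping (treating the appended $v$ as distinct from any existing copies of $v$, so the correspondence $W\leftrightarrow W'\cup\{v\}$ is a clean bijection) and a one-line justification that $\operatorname{Obs}\left(G;W\right)\subseteq\operatorname{Obs}\left(G;W\cup\{v\}\right)$. The latter holds because the domination step is monotone in the initial set and the zero-forcing closure is monotone: if $B\subseteq B'$ at some stage, then any vertex with a unique unobserved neighbor relative to $B$ either already has all neighbors in $B'$ or still forces that same neighbor, so the closure of $B$ is contained in the closure of $B'$. An alternative I would consider is a coupling argument — run the fragile process on $S$ and on $S\cup\{v\}$ using the same failure outcomes on the PMUs of $S$, so the surviving set for $S$ is contained in that for $S\cup\{v\}$ on every outcome, whence $|\operatorname{Obs}\left(G;S,q\right)|\leq|\operatorname{Obs}\left(G;S\cup\{v\},q\right)|$ pointwise and the inequality passes to expectations — but I would likely present the algebraic version above, as it matches the style of the surrounding results.
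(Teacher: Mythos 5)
Your proposal is correct and follows essentially the same route as the paper's proof: split the sum over submultisets of $S\cup\{v\}$ by whether they contain the appended $v$, identify the two pieces as $q\,\mathcal{E}\left(G;S,q\right)$ and $(1-q)$ times a sum dominating $\mathcal{E}\left(G;S,q\right)$, and invoke $|\operatorname{Obs}\left(G;W\cup\{v\}\right)|\geq|\operatorname{Obs}\left(G;W\right)|$. Your extra justification of the monotonicity of the observation closure is a welcome addition the paper leaves implicit.
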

            \begin{proof}
                By definition, $\mathcal{E}\left(G;S\cup\{v\},q\right)$ equals
                \begin{align*}
                    \sum_{W\subseteq S}|\operatorname{Obs}\left(G;W\right)|q^{|S\setminus W|+1}(1-q)^{|W|} + 
                \sum_{W\subseteq S}|\operatorname{Obs}\left(G;W \cup \{v\}\right)|q^{|S\setminus W|}(1-q)^{|W|+1}.
                \end{align*}
                where the first sum corresponds to subsets or submultisets of $S$, and the second sum to those which add $v$.
                Note that the first sum reduces to $q\, \mathcal{E}\left(G;S,q\right)$ and the second sum reduces to $(1-q) \displaystyle\sum_{W\subseteq S}|\operatorname{Obs}\left(G;W \cup \{v\}\right)|*q^{|S\setminus W|}(1-q)^{|W|}$.
                The set $W \cup \{v\}$ observes all the vertices observed by the set $W$, so $|\operatorname{Obs}\left(G;W \cup \{v\}\right)| \geq |\operatorname{Obs}\left(G;W\right)|$.
                Thus, 
                \begin{align*}
                    \mathcal{E}\left(G;S\cup\{v\},q\right) &\geq q \mathcal{E}\left(G;S,q\right)+(1-q)\mathcal{E}\left(G;S,q\right)\\
                    &= \mathcal{E}\left(G;S,q\right). \qedhere
                \end{align*}
            \end{proof}
            Restricting $S$ to be a set results in the following upper bound for the expected value polynomial.
            \begin{cor}\label{cor:wholethingbestthing}
                Let $G$ be a graph, $S\subseteq V(G)$ a set, and $q$ a probability of PMU failure.
                Then $\mathcal{E}\left(G;S,q\right) \leq \mathcal{E}\left(G;V(G),q\right)$.
            \end{cor}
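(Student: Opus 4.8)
\textbf{Proof proposal for \cref{cor:wholethingbestthing}.}
The plan is to deduce this directly from \cref{lem:Esubsetmonotone} by adding the missing vertices one at a time. Since $S\subseteq V(G)$ is a set (not a multiset), the complement $V(G)\setminus S$ is a finite set; write $V(G)\setminus S=\{v_1,v_2,\ldots,v_m\}$ for some $m\geq 0$, and for $0\leq i\leq m$ set $S_i=S\cup\{v_1,\ldots,v_i\}$, so that $S_0=S$ and $S_m=V(G)$.

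First I would observe that each $S_i$ is again a genuine set (no repeated elements), because the $v_j$ are distinct and none of them lies in $S$. Then, applying \cref{lem:Esubsetmonotone} with the set $S_{i-1}$ and the vertex $v_i$ gives $\mathcal{E}\left(G;S_{i-1},q\right)\leq\mathcal{E}\left(G;S_i,q\right)$ for each $i\in\{1,\ldots,m\}$. Chaining these inequalities (equivalently, a trivial induction on $i$) yields
\[
\mathcal{E}\left(G;S,q\right)=\mathcal{E}\left(G;S_0,q\right)\leq\mathcal{E}\left(G;S_1,q\right)\leq\cdots\leq\mathcal{E}\left(G;S_m,q\right)=\mathcal{E}\left(G;V(G),q\right),
\]
which is the claimed bound.

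There is essentially no obstacle here; the only point that needs a moment's care is precisely why the hypothesis that $S$ is a set (rather than a multiset) is used — it guarantees that repeatedly adjoining new vertices terminates exactly at $V(G)$, so the telescoping chain of applications of \cref{lem:Esubsetmonotone} actually reaches $\mathcal{E}\left(G;V(G),q\right)$. I would state this explicitly so the reader sees where the hypothesis of the corollary (as opposed to the more general setting of the lemma) enters.
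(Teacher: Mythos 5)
Your proposal is correct and matches the paper's intent exactly: the paper states this as an immediate consequence of \cref{lem:Esubsetmonotone} (prefaced only by "Restricting $S$ to be a set results in the following upper bound"), and your chain $S=S_0\subseteq S_1\subseteq\cdots\subseteq S_m=V(G)$ is precisely the implicit iteration of that lemma. Your remark about where the set hypothesis enters (so the chain terminates at $V(G)$) is a reasonable clarification to make explicit.
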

            We now compare a fixed set or multiset of vertices $S$ to those of size $|S|+1$, including ones which do not contain $S$.
            Consider the graph $G$ in \cref{Figure:11node}.
            The best placement of $1$ PMU is $\{6\}$ for all values of $q\in[0,1]$.
            Considering sets and multisets $S$ with $|S|=2$ and $\{6\} \subset S$ we calculate the following expected value polynomials:
            \begin{itemize}
                \item
                    $\mathcal{E}\left(G;S,q\right)=10q(1-q) + 7(1-q)^2$ when $S=\{2, 6\}, \{3, 6\}, \{6, 9\},$ or $\{6, 10\}$
                \item
                    $\mathcal{E}\left(G;S,q\right)=9q(1-q) + 9(1-q)^2$ when $S=\{1, 6\}, \{4, 6\}, \{6, 8\},$ or $\{6, 11\}$
                \item
                    $\mathcal{E}\left(G;S,q\right)=14q(1 - q) + 7(1 - q)^2$ when $S=\{6,6\}$
                \item
                    $\mathcal{E}\left(G;S,q\right)=13q(1-q) + 9(1-q)^2$ when $S=\{5, 6\}$ or $\{6, 7\}$.
            \end{itemize}
            The placement $\{5, 7\}$, corresponding to $\mathcal{E}\left(G;S,q\right)=12q(1-q) + 11(1-q)^2$, is expected to observe more vertices of $G$ for $q \in [0,\nicefrac{2}{3})$ than any other 2-set or multiset, including those containing vertex $6$.
            Therefore, even if a set or multiset of vertices $S$ is the best PMU placement using $|S|$ PMUs for all values of $q$, it is not the case that the best PMU placement using $|S|+1$ PMUs is of the form $S\cup\{v\}$ for some $v\in V(G)$.
            \begin{figure}[ht]
                \centering
                \includegraphics{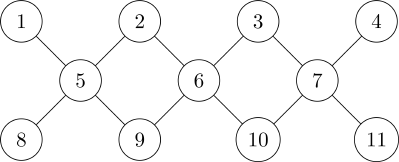}
                \caption{Small graph $G$ on 11 vertices.}
                \label{Figure:11node}
            \end{figure}
            
            It is natural to believe that once $|S| \geq \gamma_P\left(G\right)$, then $\mathcal{E}\left(G;S,q\right)$ will always be bounded above by a power dominating set.
            Unfortunately, the following describes how this is not always the case.
            \begin{thm}\label{thm:fishie}
                For any $q_*\in(0,1)\cap\mathbb{Q}$, there exists a graph $G$, failed power dominating set $F$, and power dominating set $S$ with $|F|=|S|$ such that $\mathcal{E}\left(G;F,q\right)\leq\mathcal{E}\left(G;S,q\right)$ for all $0\leq q\leq q_*$ and $\mathcal{E}\left(G;S,q\right)\leq\mathcal{E}\left(G;F,q\right)$ for all $q_*\leq q\leq 1$.
            \end{thm}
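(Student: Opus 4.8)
The plan is to prove the theorem by an explicit construction: for each rational $q_*\in(0,1)$ I will build one graph $G$ carrying both of the required sets, generalizing the crossover between the failed set $\{4,4\}$ and the power dominating set $\{2,5\}$ of \cref{Example:jspectrumexample} (which occurs at $q=\nicefrac{2}{5}$).

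Write $q_*=\nicefrac{a}{b}$ with $a,b$ positive integers and $0<a<b$. Let $G$ be the ``double broom'' obtained from a single edge $uw$ by attaching $d_u:=2b$ pendant leaves to $u$ and $d_w:=2a$ pendant leaves to $w$; denote the leaf sets by $L_u,L_w$, so $n:=|V(G)|=2+d_u+d_w$. Scaling $\nicefrac{a}{b}$ by $2$ is done so that $d_w\geq 2$ while still $d_w/d_u=q_*$. I would take $S=\{u,w\}$ and $F=\{u,u\}$ (a $2$-multiset), so that $|F|=|S|=2$. The three observation sets that matter are then easy to pin down. Since $N[u]\cup N[w]=V(G)$, the set $S$ is a dominating set, hence power dominating, so $|\operatorname{Obs}\left(G;S\right)|=n$. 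For a single PMU at $u$ the domination step produces $N[u]=\{u,w\}\cup L_u$, and the zero-forcing step stalls immediately, because $w$ still has the $d_w\geq 2$ vertices of $L_w$ unobserved and every leaf of $u$ sees only $u$; hence $|\operatorname{Obs}\left(G;\{u\}\right)|=2+d_u<n$. In particular $\operatorname{Obs}\left(G;F\right)=\operatorname{Obs}\left(G;\{u\}\right)\neq V(G)$, so $F$ really is a failed power dominating set, and symmetrically $|\operatorname{Obs}\left(G;\{w\}\right)|=2+d_w$.

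Finally I would expand both expected value polynomials via \cref{obs:calculateexpolbinomial}, counting the submultiset $\{u\}$ of $F$ with its multiplicity $\binom{2}{1}=2$ as in \cref{Example:jspectrumexample}:
\begin{align*}
\mathcal{E}\left(G;F,q\right) &= (2+d_u)\bigl(2q(1-q)+(1-q)^2\bigr)=(2+d_u)(1-q^2),\\
\mathcal{E}\left(G;S,q\right) &= (n+2)\,q(1-q)+n(1-q)^2.
\end{align*}
Subtracting, factoring out $1-q$, and using $n-2-d_u=d_w=d_u q_*$ gives
\[\mathcal{E}\left(G;S,q\right)-\mathcal{E}\left(G;F,q\right)=d_u\,(1-q)\,(q_*-q).\]
As $d_u(1-q)\geq 0$ on $[0,1]$, this difference is nonnegative on $[0,q_*]$ and nonpositive on $[q_*,1]$, which is exactly the asserted comparison.

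The two expansions are routine binomial bookkeeping; the delicate point is the zero-forcing analysis of $\operatorname{Obs}\left(G;\{u\}\right)$, where one must ensure $d_w\geq 2$ — otherwise $w$ would force its unique leaf and $\{u\}$ would observe all of $G$, collapsing the ``failed'' label (this is the reason for the factor of $2$). The only genuinely creative step is recognizing that the double broom is a family flexible enough to slide the crossover point to any prescribed rational while simultaneously keeping $S$ power dominating and $F$ failed; once that family is in hand, the rest is computation.
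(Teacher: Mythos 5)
Your construction is correct and the algebra checks out: with $d_u=2b$ and $d_w=2a$ one gets $\mathcal{E}\left(G;S,q\right)-\mathcal{E}\left(G;F,q\right)=d_u(1-q)(q_*-q)$, which is nonnegative on $[0,q_*]$ and nonpositive on $[q_*,1]$, and your zero-forcing analysis (including the insistence that $d_w\geq 2$ so that $w$ cannot force) is exactly the delicate point. The paper's proof follows the same overall strategy --- an explicit two-parameter family in which both expected value polynomials are quadratics vanishing together at $q=1$, so their difference is $(1-q)$ times a linear factor whose root is tuned to the prescribed rational --- but uses a different family: the graphs $G_{a,b}$ of \cref{fig:zims_cousin}, obtained by subdividing two designated edges $a$ and $b$ times, with $S=\{s_0,s_1\}$ observing all $10+a+b$ vertices, $F=\{f_0,f_1\}$ observing $6+b$, and crossover at $\nicefrac{(a+4)}{(b-2)}$. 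Your double broom is smaller and realizes the crossover more transparently as the ratio $\nicefrac{d_w}{d_u}$. The one substantive difference is that your $F=\{u,u\}$ is a multiset while the paper's $F$ is a set of two distinct vertices; the paper's own usage (it calls $\{4,4\}$ a failed power dominating set in the discussion of \cref{Example:jspectrumexample}) supports reading the theorem as permitting multisets, but if one insists on $F\subseteq V(G)$, a small repair suffices: take $F=\{u,x\}$ for a leaf $x$ of $u$, which adds $2q(1-q)$ to $\mathcal{E}\left(G;F,q\right)$ and moves the crossover to $\nicefrac{d_w}{(d_u+2)}$, so choosing $d_u=2b-2$ and $d_w=2a$ restores the crossover at $q_*=\nicefrac{a}{b}$.
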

            \begin{figure}[ht]
                \centering
                \includegraphics{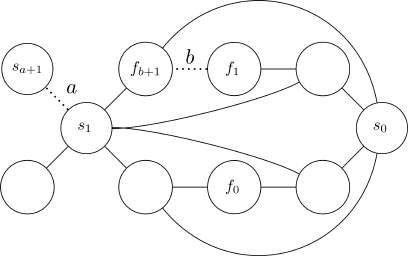}
                \caption{The family of graphs $G_{a,b}$ where the edge $s_1s_{a+1}$ was subdivided $a$ times and the edge $f_1f_{b+1}$ was subdivided $b$ times. Note that $\{s_0,s_1\}$ is a power dominating set and $\{f_0,f_1\}$ is a failed power dominating set.}
                \label{fig:zims_cousin}
            \end{figure}
            \begin{proof}
                Consider the graph $G_{a,b}$ as in \cref{fig:zims_cousin} where the edge $s_1s_{a+1}$ is subdivided $a\geq 1$ times and the edge $f_1f_{b+1}$ is subdivided $b\geq1$ times.
                It can be verified that $S=\{s_0,s_1\}$ is a power dominating set that observes all $10+a+b$ vertices and $F=\{f_0,f_1\}$ is a failed power dominating set that observes only $6+b$ vertices.
                We show that $a$ and $b$ can be chosen so that the unique intersection of $\mathcal{E}\left(G_{a,b};S,q\right)$ and $\mathcal{E}\left(G_{a,b};F,q\right)$ is any $q_*\in(0,1)\cap\mathbb{Q}$.
                Observe that
                \begin{align*}
                    \mathcal{E}\left(G_{a,b};S,q\right) &= (10 + a + b)(1-q)^2 + (12 + a)(1-q)q
                \end{align*}
                and
                \begin{align*}
                    \mathcal{E}\left(G_{a,b};F,q\right) &= (6 + b)(1-q)^2 + (6 + b)(1-q)q.
                \end{align*}
                Let $g(q)=\mathcal{E}\left(G_{a,b};S,q\right) - \mathcal{E}\left(G_{a,b};F,q\right)$ and notice $g$ can be simplified to $$g(q)=(q-1)\big((b-2)q-(a+4)\big).$$
                In this form, it can be seen that the zeros of $g$ are $1$ and $\frac{a+4}{b-2}$.
                Then given any $q_*\in(0,1)\cap\mathbb{Q}$, choose positive integers $a$ and $b$ such that $q_*=\frac{a+4}{b-2}$.
        
                As $S$ is a power dominating set and $F$ is a failed power dominating set,  $\mathcal{E}\left(G_{a,b};F,0\right) < \mathcal{E}\left(G_{a,b};S,0\right)$.
                Then for all $0\leq q\leq \frac{a+4}{b-2}$, it follows that $\mathcal{E}\left(G_{a,b};F,q\right)\leq\mathcal{E}\left(G_{a,b};S,q\right)$.
                Since $g(q)$ is a quadratic whose roots are $\frac{a+4}{b-2}$ and $1$, it necessarily follows that $\mathcal{E}\left(G_{a,b};S,q\right)\leq\mathcal{E}\left(G_{a,b};F,q\right)$ for $\frac{a+4}{b-2}\leq q\leq 1$.
            \end{proof}
    \section{Probability of observing the entire graph}\label{sec:probability}    
        Given a graph $G$, an initial placement of PMUs $S$, and a probability $q$ of PMU failure, we now investigate the probability that the entire vertex set will be observed at the termination of the fragile power domination process.
        \cref{cor:wholethingbestthing} shows that when restricting to sets as PMU placements, the best probability of observing the entire graph $G$ occurs when $S=V(G)$.
        We examine this in \cref{sec:entire}.
        We then consider how the probability of observing the entire graph is related to the failed power domination number and the $k$-PMU-defect-robust power domination number in \cref{sec:entire-other}. 
        We conclude with examples and calculate the exact probability for stars in \cref{sec:entire-ex}.
        \subsection{Using the entire the vertex set}\label{sec:entire} 
            If a PMU is placed on every vertex, the probability that this placement observes the entire graph is connected to the total number of power dominating sets.
            In \cite{bpst23}, Brimkov, et. al. introduced the \emph{power domination polynomial} as a tool to count the number of power dominating sets of a graph. 
            \begin{defn}\label{def:pdpolynomial}
                Given a graph $G$ on $n$ vertices, let $p(G;i)$ be the number of power dominating sets of size $i$. Define the power domination polynomial as
                \[ \mathcal{P}(G;x)=\sum_{i=1}^{n} p(G;i)x^i.\]
            \end{defn}
            Using power domination polynomial notation, the probability of observing the entire graph with a PMU placed on every vertex is as follows.
            \begin{obs}\label{thm:proballnobservedexact}
                Let $G$ be a graph on $n$ vertices and $q$ be a probability of PMU failure.
                Then,
               \[ \operatorname{Prob}\Big[|\operatorname{Obs}\left(G;V(G),q\right)|=n\Big] = \sum_{i=1}^{n} p(G;i)q^{n-i}(1-q)^i. \]
            \end{obs}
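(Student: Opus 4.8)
The plan is to condition on the random set $S^*$ of surviving PMUs and exploit the independence of the individual failures. Because a PMU is placed on every vertex, $S^*$ is a random subset of $V(G)$ in which each vertex appears independently with probability $1-q$; hence for any fixed $W\subseteq V(G)$ we have $\operatorname{Prob}\big[S^*=W\big]=(1-q)^{|W|}q^{n-|W|}$, since $W$ is exactly the event that each of the $|W|$ vertices of $W$ survives and each of the remaining $n-|W|$ vertices fails.

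Next I would observe that the event $\{|\operatorname{Obs}\left(G;V(G),q\right)|=n\}$ occurs precisely when the realized surviving set satisfies $\operatorname{Obs}\left(G;S^*\right)=V(G)$ (as $\operatorname{Obs}\left(G;S^*\right)\subseteq V(G)$ always), i.e.\ when $S^*$ is a power dominating set of $G$. Applying the law of total probability over the finitely many possible values of $S^*$ then gives
\[
\operatorname{Prob}\big[|\operatorname{Obs}\left(G;V(G),q\right)|=n\big]
=\sum_{\substack{W\subseteq V(G):\\\operatorname{Obs}\left(G;W\right)=V(G)}}\operatorname{Prob}\big[S^*=W\big]
=\sum_{\substack{W\subseteq V(G):\\\operatorname{Obs}\left(G;W\right)=V(G)}}(1-q)^{|W|}q^{n-|W|}.
\]

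Finally I would group the power dominating sets $W$ by cardinality. For $n\geq1$ the empty set is never a power dominating set, so only sizes $i\in\{1,\ldots,n\}$ contribute, and by \cref{def:pdpolynomial} there are exactly $p(G;i)$ power dominating sets of size $i$; substituting yields $\sum_{i=1}^{n}p(G;i)q^{n-i}(1-q)^i$, as claimed.

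There is essentially no serious obstacle here: the statement is a direct translation of the fragile power domination process into the language of the power domination polynomial. The only points deserving explicit mention are that $\operatorname{Obs}\left(G;S^*\right)=V(G)$ is equivalent to $S^*$ being a power dominating set (immediate from the definition of a power dominating set) and that the $i=0$ term vanishes, which is why the sum may be taken to start at $i=1$.
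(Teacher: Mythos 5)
Your proposal is correct and is exactly the reasoning the paper intends: the paper states this as an unproved Observation, relying on precisely the decomposition you give (condition on the surviving set $S^*$, note that full observation is equivalent to $S^*$ being a power dominating set, and group by cardinality via $p(G;i)$). Nothing is missing.
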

            If $i>\overline{\gamma}_P\left(G\right)$ then all subsets of $V(G)$ of with $i$ vertices are power dominating sets.
            This gives a lower bound for the probability that $V(G)$ observes the entire graph.
            \begin{prop}
                Let $G$ be a graph on $n$ vertices with $\overline{\gamma}_P\left(G\right)=f$ and let $q$ be a probability of PMU failure.
                Then
                \[ \operatorname{Prob}\Big[|\operatorname{Obs}\left(G;V(G),q\right)|=n\Big] \geq \sum_{i=f+1}^{n} \binom{n}{i}q^{n-i}(1-q)^i. \]
            \end{prop}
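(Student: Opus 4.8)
The plan is to reduce this directly to \cref{thm:proballnobservedexact}, which expresses the probability exactly as a weighted sum over the numbers $p(G;i)$ of power dominating sets of each size. First I would write
\[
\operatorname{Prob}\Big[|\operatorname{Obs}\left(G;V(G),q\right)|=n\Big] = \sum_{i=1}^{n} p(G;i)\,q^{n-i}(1-q)^i,
\]
using \cref{thm:proballnobservedexact}, so that the statement becomes a purely combinatorial comparison of the coefficients on the right-hand side with $\binom{n}{i}$.

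The key step is the observation that $p(G;i)=\binom{n}{i}$ for every $i\geq f+1$. Indeed, if some $i$-subset $A\subseteq V(G)$ with $i\geq f+1$ failed to be a power dominating set, then $\operatorname{Obs}\left(G;A\right)\neq V(G)$, so $A$ would witness $\overline{\gamma}_P\left(G\right)\geq i\geq f+1$, contradicting $\overline{\gamma}_P\left(G\right)=f$. Hence every $i$-subset of $V(G)$ with $i\geq f+1$ is a power dominating set, and since there are exactly $\binom{n}{i}$ such subsets, $p(G;i)=\binom{n}{i}$ for all such $i$.

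Finally, I would note that since $q\in[0,1]$ and $p(G;i)\geq 0$, every term $p(G;i)\,q^{n-i}(1-q)^i$ is nonnegative, so discarding the terms with $1\leq i\leq f$ only decreases the total, giving
\[
\sum_{i=1}^{n} p(G;i)\,q^{n-i}(1-q)^i \;\geq\; \sum_{i=f+1}^{n} p(G;i)\,q^{n-i}(1-q)^i \;=\; \sum_{i=f+1}^{n}\binom{n}{i}\,q^{n-i}(1-q)^i,
\]
which is exactly the claimed bound. There is no substantial obstacle here; the only points needing care are applying the definition of $\overline{\gamma}_P$ in the correct direction — so that sets larger than $f$ are forced to power dominate — and observing that the dropped terms are nonnegative, which is what licenses the inequality.
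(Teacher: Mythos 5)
Your proof is correct and matches the paper's reasoning: the paper justifies this proposition precisely by noting that every $i$-subset with $i>\overline{\gamma}_P\left(G\right)$ must be a power dominating set (so $p(G;i)=\binom{n}{i}$ there) and then dropping the remaining nonnegative terms from the exact expression in \cref{thm:proballnobservedexact}. No gaps; you have simply written out the details the paper leaves implicit.
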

            For a connected graph on at least $3$ vertices, $p(G;i)=\binom{n}{i}$ for $n-2 \leq i \leq n$ \cite[Corollary 4]{bpst23}.
            Moreover, any such graph has $\overline{\gamma}_P\left(G\right) \leq n-3$. This gives the following lower bound.
            \begin{cor}\label{cor:nminus2subsets}
                Let $G$ be a connected graph on $n \geq 3$ vertices and let $q$ be a probability of PMU failure.
                Then 
                \[ \operatorname{Prob}\Big[|\operatorname{Obs}\left(G;V(G),q\right)|=n\Big]\geq \binom{n}{n-2}q^2(1-q)^{n-2}+nq(1-q)^{n-1}+(1-q)^{n}. \]
            \end{cor}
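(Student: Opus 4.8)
The plan is to derive this directly from \cref{thm:proballnobservedexact} together with the cited enumeration of power dominating sets of large cardinality. First I would write, using \cref{thm:proballnobservedexact},
\[
\operatorname{Prob}\Big[|\operatorname{Obs}\left(G;V(G),q\right)|=n\Big] = \sum_{i=1}^{n} p(G;i)q^{n-i}(1-q)^i,
\]
and observe that every summand is nonnegative, since $p(G;i)\ge 0$ and $q,1-q\in[0,1]$. Hence the sum is bounded below by the sum of any subcollection of its terms. Because $n\ge 3$ guarantees $1\le n-2$, the three indices $i=n-2,\,n-1,\,n$ lie in the range of summation, so
\[
\operatorname{Prob}\Big[|\operatorname{Obs}\left(G;V(G),q\right)|=n\Big] \ge \sum_{i=n-2}^{n} p(G;i)q^{n-i}(1-q)^i.
\]

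Next I would invoke \cite[Corollary 4]{bpst23}, which gives $p(G;i)=\binom{n}{i}$ for all $n-2\le i\le n$ when $G$ is connected on $n\ge 3$ vertices. Substituting these values and simplifying with $\binom{n}{n-1}=n$ and $\binom{n}{n}=1$ produces exactly
\[
\binom{n}{n-2}q^2(1-q)^{n-2}+nq(1-q)^{n-1}+(1-q)^{n},
\]
which is the asserted lower bound. Equivalently, one could start from the preceding proposition: since $\overline{\gamma}_P\left(G\right)=f\le n-3$ for such graphs, the lower bound $\sum_{i=f+1}^{n}\binom{n}{i}q^{n-i}(1-q)^i$ already contains the terms $i=n-2,\,n-1,\,n$, and discarding the remaining nonnegative terms yields the result.

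There is no substantive obstacle here; the statement is a routine specialization. The only points needing a moment's care are verifying that the hypothesis $n\ge 3$ makes the index set $\{n-2,n-1,n\}$ legitimate (so that we are genuinely discarding nonnegative terms rather than inventing new ones), and checking that the hypotheses of \cite[Corollary 4]{bpst23}, namely connectedness and $n\ge 3$, coincide with those of the corollary being proved.
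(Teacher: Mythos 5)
Your proposal is correct and follows essentially the same route the paper intends: the corollary is derived from \cref{thm:proballnobservedexact} (or equivalently the preceding proposition with $\overline{\gamma}_P\left(G\right)\leq n-3$) by keeping only the nonnegative terms with $i\in\{n-2,n-1,n\}$ and substituting $p(G;i)=\binom{n}{i}$ from \cite[Corollary 4]{bpst23}. No gaps; the care you take with the index range and the hypotheses of the cited result is exactly what is needed.
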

        \subsection{Relating to other power domination parameters}\label{sec:entire-other}
            In this section we produce estimates for observing the entire graph using the failed power domination number and the $k$-PMU-defect-robust power domination number.
            \begin{prop} \label{prop:FailedProb}
                Let $G$ be a graph on $n$ vertices, $S$ be a set of vertices, and $q\in(0,1)$ be a probability of PMU failure.
                If $|S|\geq \overline{\gamma}_P\left(G\right)+(\log\varepsilon)/(\log q)$, then $$\operatorname{Prob}\big[|\operatorname{Obs}\left(G;S,q\right)|\big]=n\geq 1-\varepsilon.$$
            \end{prop}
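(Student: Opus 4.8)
The plan is to reduce the statement to a tail estimate on the number of PMUs that fail, using $\overline{\gamma}_P\left(G\right)$ as the threshold past which full observation is guaranteed. The crucial structural fact is that any subset of $V(G)$ of cardinality strictly greater than $\overline{\gamma}_P\left(G\right)$ is a power dominating set, since a failed power dominating set has at most $\overline{\gamma}_P\left(G\right)$ vertices. Hence, if the surviving set $S^*$ satisfies $|S^*| > \overline{\gamma}_P\left(G\right)$, then $\operatorname{Obs}\left(G;S,q\right) = V(G)$, and so, exactly as in the bound above for the placement $S=V(G)$,
\[\operatorname{Prob}\big[|\operatorname{Obs}\left(G;S,q\right)| = n\big] \;\geq\; \operatorname{Prob}\big[\,|S^*| > \overline{\gamma}_P\left(G\right)\,\big] \;=\; 1 - \operatorname{Prob}\big[\,|S^*| \leq \overline{\gamma}_P\left(G\right)\,\big].\]
It therefore suffices to bound $\operatorname{Prob}\big[|S^*|\leq\overline{\gamma}_P\left(G\right)\big]$ by $\varepsilon$ (we may assume $\varepsilon\in(0,1)$, since otherwise the conclusion is trivial).

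Write $m=|S|$ and $f=\overline{\gamma}_P\left(G\right)$. The event $\{|S^*|\leq f\}$ is exactly the event that at least $m-f$ of the $m$ PMUs fail, and each PMU fails independently with probability $q$. The step I expect to be the main obstacle is getting a clean bound on this tail: I would aim for $\operatorname{Prob}\big[|S^*|\leq f\big] \leq q^{m-f}$, i.e. controlling the probability of at least $m-f$ failures by the probability $q^{m-f}$ that some prescribed block of $m-f$ PMUs all fail. Care is needed here, since a crude union bound over which PMUs constitute the failing block would introduce a binomial factor; it is obtaining the estimate in this sharp exponential form that produces the logarithmic dependence on $\varepsilon$ in the hypothesis.

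Granting the tail bound, the remainder is bookkeeping with the hypothesis. From $m \geq f + (\log\varepsilon)/(\log q)$ we get $m-f \geq (\log\varepsilon)/(\log q)$; multiplying through by $\log q$, which is negative because $q\in(0,1)$ and hence reverses the inequality, gives $(m-f)\log q \leq \log\varepsilon$, that is $\log\!\big(q^{m-f}\big) \leq \log\varepsilon$, so $q^{m-f}\leq\varepsilon$. Combining this with the tail bound and the first display yields $\operatorname{Prob}\big[|\operatorname{Obs}\left(G;S,q\right)| = n\big] \geq 1 - q^{m-f} \geq 1-\varepsilon$. The two things to keep an eye on throughout are the reversal of the inequality when multiplying by $\log q < 0$ and, more substantively, the sharpness of the bound on the failure-count tail.
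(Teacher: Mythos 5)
Your reduction is exactly the paper's: pass to the event $\{|S^*|\geq \overline{\gamma}_P\left(G\right)+1\}$, which guarantees full observation because every set larger than $\overline{\gamma}_P\left(G\right)$ is a power dominating set, and then try to bound the lower tail of $|S^*|$. The bookkeeping at the end (multiplying by $\log q<0$ and reversing the inequality) also matches the paper.

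The gap is the step you flag and then grant: the tail bound $\operatorname{Prob}\big[|S^*|\leq f\big]\leq q^{|S|-f}$ is never proved, and it cannot be, because it is false. Writing $m=|S|$, the event $\{|S^*|\leq f\}$ is the event of at least $m-f$ failures among $m$ independent failures of probability $q$; already for $m=2$ and $f=1$ this probability is $1-(1-q)^2=q(2-q)>q=q^{m-f}$ for every $q\in(0,1)$. Your instinct that a union bound over which block of $m-f$ PMUs fails introduces a binomial factor is right, and that factor cannot be removed: the honest estimate is $\operatorname{Prob}\big[|S^*|\leq f\big]\leq\big(\sum_{i=0}^{f}\binom{m}{i}\big)q^{m-f}$. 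Indeed the proposition itself fails as stated: take $G=S_4$ (so $\overline{\gamma}_P\left(G\right)=1$), let $S$ be two leaves, and set $q=\varepsilon=\nicefrac{1}{2}$; the hypothesis $|S|=2\geq 1+(\log\varepsilon)/(\log q)=2$ holds, yet by \cref{prop:startotalobs} the probability of full observation is $(1-q)^{2}=\nicefrac{1}{4}<\nicefrac{1}{2}=1-\varepsilon$. For what it is worth, the paper's own proof stumbles at the same spot but in the opposite way: it establishes the (true) lower bound $\operatorname{Prob}\big[|S^*|\leq f\big]\geq q^{|S|-f}$ and then uses it as if it were an upper bound. So you have correctly located the crux; what is missing is not a cleverer proof of the sharp tail bound but a corrected statement, e.g.\ strengthening the hypothesis on $|S|$ to absorb the factor $\sum_{i=0}^{f}\binom{|S|}{i}$.
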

            \begin{proof}
                Let $G$ be a graph and $q\in(0,1)$.
                Say $\overline{\gamma}_P\left(G\right)=f$. Then
                \begin{align*}
                    \operatorname{Prob} \big[ |\operatorname{Obs}\left(G;S,q\right)|=n \big] \geq\operatorname{Prob}\big[|S^*|\geq f+1\big] 
                    &=1-\operatorname{Prob} \big[|S^*|\leq f\big] \\
                    &\geq 1-q^{|S|-f}
                \end{align*}                        
                since
                \[\operatorname{Prob} \big[|S^*|\leq f\big]=\sum_{i=0}^f\operatorname{Prob} \big[|S^*|=i\big]\geq\operatorname{Prob}\big[|S^*|=f\big]=\operatorname{Prob}\big[|S\setminus S^*|=|S|-f\big]=q^{|S|-f}\]
                and so $\operatorname{Prob} \big[ |\operatorname{Obs}\left(G;S,q\right)|=n \big]\geq 1-\varepsilon$ is implied by $1-q^{|S|-f}\geq1-\varepsilon$ which is equivalent to
                \[\varepsilon\geq q^{|S|-f}\iff \log\varepsilon\geq(|S|-f)\log q\iff \frac{\log \varepsilon}{\log q}+f\leq |S|. \qedhere\]
            \end{proof}
            \cref{prop:FailedProb} is most useful for graphs where $\gamma_P\left(G\right)$ and $\overline{\gamma}_P\left(G\right)$ are relatively close, such as $K_{3,3,\ldots,3}.$
            However, there are graphs with an arbitrarily large gap between $\gamma_P\left(G\right)$ and $\overline{\gamma}_P\left(G\right)$, such as the star $S_n$.
            For these graphs, \cref{prop:FailedProb} may not give a useful result, such as implying $|S|> n$ when $S$ must be a set.
            When $\overline{\gamma}_P\left(G\right)=0$, \cref{prop:FailedProb} yields the following.
            \begin{cor}\label{cor:gpf0observeall} 
            Let $G$ be a graph on $n$ vertices, $S$ a set of vertices, and $q\in(0,1)$ a probability of PMU failure.
            If $\overline{\gamma}_P\left(G\right)=0$, then
                \begin{align*}
                    \operatorname{Prob} \big[ |\operatorname{Obs}\left(G;S,q\right)|=n \big]=1-q^{|S|}.
                \end{align*} 
                Equivalently, if $\overline{\gamma}_P\left(G\right)=0$ and $|S|\geq(\log\varepsilon)/(\log q)$, then $$\operatorname{Prob} \big[ |\operatorname{Obs}\left(G;S,q\right)|=n \big]\geq 1-\varepsilon.$$
            \end{cor}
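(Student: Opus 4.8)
The plan is to extract from the hypothesis $\overline{\gamma}_P\left(G\right)=0$ the fact that \emph{every} nonempty subset of $V(G)$ is a power dominating set, and then to identify exactly which outcomes of the sensor-failure step observe all of $G$.

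First I would unpack $\overline{\gamma}_P\left(G\right)=0$: by definition the largest $F\subseteq V(G)$ with $\operatorname{Obs}\left(G;F\right)\neq V(G)$ has cardinality $0$, so the only such $F$ is $\varnothing$. Consequently $\operatorname{Obs}\left(G;\{v\}\right)=V(G)$ for every $v\in V(G)$, and since enlarging the initial set never removes observed vertices, $\operatorname{Obs}\left(G;W\right)=V(G)$ for every nonempty $W\subseteq V(G)$.

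Next I would run the fragile power domination process with initial set $S$ and let $S^*$ be the random set of surviving PMUs. If $S^*\neq\varnothing$ then $S^*$ is a power dominating set, so $\operatorname{Obs}\left(G;S,q\right)=V(G)$; if $S^*=\varnothing$ then $\operatorname{Obs}\left(G;S,q\right)=\varnothing\neq V(G)$, using that $G$ has at least one vertex. Hence $\{|\operatorname{Obs}\left(G;S,q\right)|=n\}$ is precisely the event $\{S^*\neq\varnothing\}$, and since $S$ is a set whose $|S|$ PMUs fail independently with probability $q$, we get $\operatorname{Prob}\big[S^*=\varnothing\big]=q^{|S|}$, so $\operatorname{Prob}\big[|\operatorname{Obs}\left(G;S,q\right)|=n\big]=1-q^{|S|}$; this is also the chain of inequalities in the proof of \cref{prop:FailedProb} specialized to $f=0$, with the step $\operatorname{Prob}\big[|\operatorname{Obs}\left(G;S,q\right)|=n\big]\geq\operatorname{Prob}\big[|S^*|\geq 1\big]$ now an equality. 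For the equivalent form I would rearrange $1-q^{|S|}\geq 1-\varepsilon$ to $q^{|S|}\leq\varepsilon$, take logarithms, and use that $\log q<0$ for $q\in(0,1)$ to flip the inequality into $|S|\geq(\log\varepsilon)/(\log q)$; this part is also immediate from \cref{prop:FailedProb}. There is essentially no obstacle here beyond checking that the inequality inherited from \cref{prop:FailedProb} is tight, which is exactly where the hypothesis $\overline{\gamma}_P\left(G\right)=0$ — rather than merely an upper bound on the failed power domination number — is used.
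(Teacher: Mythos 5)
Your proof is correct and follows essentially the same route as the paper, which presents this corollary as an immediate specialization of \cref{prop:FailedProb} to $\overline{\gamma}_P\left(G\right)=0$ without writing out a separate argument. You correctly supply the one detail the paper leaves implicit, namely that both inequalities in the proof of \cref{prop:FailedProb} become equalities when $f=0$ because the event of full observation coincides exactly with $\{S^*\neq\varnothing\}$ and $\operatorname{Prob}\big[S^*=\varnothing\big]=q^{|S|}$.
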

            \cref{cor:gpf0observeall} applies to paths, cycles, wheels, and complete graphs.
            For more examples of graphs with failed power domination number zero, see \cite{gjlr20}.
            
            A higher probability of observing the entire graph is related to PMU-defect-robust power domination.
            A $k$-rPDS is a good choice for a PMU placement because any $k$ PMUs can fail and not compromise the observability of the graph.
            \begin{thm}\label{proballnrobust}
                Given a graph $G$ on $n$ vertices, a $k$-PMU-defect-robust power dominating set or multiset $S$, and a probability of PMU failure $q$, the probability that $S$ observes the entire graph $G$ is
                \[ \operatorname{Prob}\Big[|\operatorname{Obs}\left(G;S,q\right)|=n\Big] \geq 1-\sum_{i=0}^{|S|-k-1} \binom{|S|}{i} q^{|S|-i}(1-q)^{i}.
                \]
                Equality holds when $|S|-k=\gamma_P\left(G\right)$.
            \end{thm}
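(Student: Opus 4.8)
The plan is to bound the failure event---that the entire graph is \emph{not} observed---by the event that too many PMUs fail. Since $S$ is a $k$-PMU-defect-robust power dominating set, any submultiset $A \subseteq S$ with $|A| \geq |S| - k$ satisfies $\operatorname{Obs}\left(G;A\right) = V(G)$. Hence, conditioning on the surviving multiset $S^*$, if $|S^*| \geq |S| - k$ then the entire graph is certainly observed. Therefore
\[
\operatorname{Prob}\Big[|\operatorname{Obs}\left(G;S,q\right)|=n\Big] \geq \operatorname{Prob}\big[|S^*| \geq |S|-k\big] = 1 - \operatorname{Prob}\big[|S^*| \leq |S|-k-1\big].
\]
Each PMU survives independently with probability $1-q$, so $|S^*|$ is binomial with parameters $|S|$ and $1-q$, giving $\operatorname{Prob}\big[|S^*| = i\big] = \binom{|S|}{i} (1-q)^i q^{|S|-i}$. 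Summing over $i$ from $0$ to $|S|-k-1$ yields exactly the stated bound.

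For the equality claim when $|S| - k = \gamma_P\left(G\right)$, I would argue that in this case the inequality $\operatorname{Obs}\left(G;A\right) = V(G)$ holds \emph{exactly} on the threshold: no submultiset $A$ with $|A| < |S| - k = \gamma_P\left(G\right)$ can be a power dominating set, since $\gamma_P\left(G\right)$ is the minimum size of a power dominating set. Thus $\operatorname{Obs}\left(G;A\right) = V(G)$ if and only if $|A| \geq |S| - k$ (the forward direction from robustness of $S$, the reverse from minimality). Consequently, conditioning on $S^*$, the event $\{|\operatorname{Obs}\left(G;S,q\right)|=n\}$ is \emph{equivalent} to $\{|S^*| \geq |S|-k\}$, so the inequality above becomes an equality.

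The main subtlety is the equality direction: one must be careful that $S$ being a $k$-rPDS only guarantees that \emph{every} submultiset of size exactly $|S|-k$ contains a power dominating set, not that it \emph{is} one, and that a submultiset of size $|S|-k$ could in principle still contain a power dominating set of size strictly less than $\gamma_P\left(G\right)$---but this is impossible by definition of $\gamma_P$, so the argument goes through. A second point worth stating explicitly is that the argument handles multisets correctly: when $S$ is a multiset, "$|S^*| = i$" counts surviving PMUs with multiplicity, and the binomial probability is still valid because the $|S|$ survival events are independent regardless of repeated vertices. No genuine obstacle remains once these bookkeeping points are handled; the core of the proof is the one-line conditioning argument together with \cref{fact1}-style manipulation of the binomial tail.
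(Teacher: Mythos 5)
Your proposal is correct and follows essentially the same route as the paper's proof: bound the failure event by the event that more than $k$ PMUs fail, compute the binomial tail, and observe that when $|S|-k=\gamma_P\left(G\right)$ no surviving submultiset of smaller size can observe everything, so the two events coincide. Your explicit handling of the ``contains a power dominating set'' versus ``is a power dominating set'' distinction is a welcome clarification that the paper glosses over.
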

            \begin{proof}
                Let $S$ be a $k$-rPDS, then all subsets or submultisets $W \subseteq S$ with $|W| \geq |S|-k$ are power dominating sets.
                The probability that $S$ observes all of $G$ is at least the probability that $|W| \geq |S|-k$, or one minus the probability that $|W|<|S|-k$.
                The inequality then follows.
                
                Note that if $|S|-k = \gamma_P\left(G\right)$, then $W\subseteq S$ is a power dominating set if and only if $|W|\geq |S|-k$ and so equality holds.
            \end{proof}
        \subsection{Examples}\label{sec:entire-ex}
            Consider the complete bipartite graph $K_{3,3}$, a probability of PMU failure of $q=0.1$, and a desired probability of observing the entire graph of $95\%$.
            Note that any set of $2$ distinct vertices of $K_{3,3}$ forms a power dominating set.
            For any $2$-set $S$, use \cref{proballnrobust} to obtain
            \[ \operatorname{Prob}\Big[|\operatorname{Obs}\left(K_{3,3};S,0.1\right)|=6\Big] = 1-\sum_{i=0}^{1} \binom{2}{i} (0.1)^{2-i}(0.9)^{i} = 0.81, \]
            which is too small.
            If we instead utilize $S'$ consisting of any 3 distinct vertices, $S'$ is a $1$-rPDS and we find by \cref{proballnrobust} that
            \[ \operatorname{Prob}\Big[|\operatorname{Obs}\left(K_{3,3};S',0.1\right)|=6\Big] = 1-\sum_{i=0}^{1} \binom{3}{i} (0.1)^{3-i}(0.9)^{i} = 0.972.
            \]
            Hence, a 1-rPDS is sufficient for observing all of $K_{3,3}$ with the desired $95\%$ probability.
            
            We now determine the probability of observing the entire star $S_n$ given \emph{any} PMU placement. 
            \begin{prop}\label{prop:startotalobs}
                Let $S_n$ denote the star with universal vertex $v_0$ and let $q$ be a probability of PMU failure. Then for any set $S\subseteq V(S_n)$, $\operatorname{Prob} \big[ |\operatorname{Obs}\left(S_n;S,q\right)|=n \big]$ is equal to
                \begin{align*}
                    \begin{cases}
                        (1-q)^{|S|},&\text{if $v_0\not\in S$ and $|S|= n-2$}\\
                        (1-q)^{|S|}+|S|q(1-q)^{|S|-1},&\text{if $v_0\not\in S$ and $|S|= n-1$}\\
                        1-q,&\text{if $v_0\in S$ and $|S|\leq n-2$}\\
                        1-q+q(1-q)^{|S|-1},&\text{if $v_0\in S$ and $|S|= n-1$}\\
                        1-q+q(1-q)^{|S|-1}+(|S|-1)q^2(1-q)^{|S|-2},&\text{if $v_0\in S$ and $|S|= n$}\\
                        0,&\text{otherwise.} 
                    \end{cases}
                \end{align*}
            \end{prop}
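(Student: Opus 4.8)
The plan is to first determine exactly which subsets $W\subseteq V(S_n)$ are power dominating sets of $S_n$, and then, for a fixed placement $S$, sum $q^{|S\setminus W|}(1-q)^{|W|}$ over those successful survival outcomes $W\subseteq S$, using \cref{fact1} to collapse the large blocks of outcomes. Throughout I take $n\geq 3$, so that $S_n$ has at least two leaves (the formula degenerates for smaller $n$).

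\emph{Step 1: characterizing the power dominating sets of $S_n$.} Write the leaves of $S_n$ as $v_1,\dots,v_{n-1}$ and argue directly from the two steps of the process. If $v_0\in W$, the domination step already gives $B=N[v_0]=V(S_n)$. If $v_0\notin W$ and $W\neq\varnothing$, then after domination $B=\{v_0\}\cup W$ (where $W$ consists only of leaves), and the only vertex of $B$ with more than one neighbor is $v_0$; hence a zero forcing step is possible precisely when $v_0$ has exactly one unobserved neighbor, i.e.\ when $W$ misses exactly one leaf, in which case that leaf is forced and the process terminates having observed everything. Putting these together, for $n\geq 3$,
\[
\operatorname{Obs}\left(S_n;W\right)=V(S_n)\iff v_0\in W\ \text{ or }\ \big(v_0\notin W\text{ and }|W|\geq n-2\big),
\]
where the hypothesis $n\geq 3$ makes $n-2\geq 1$ and so rules out $W=\varnothing$.

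\emph{Step 2: summing the probabilities.} By the definition of the fragile process, $\operatorname{Prob}\big[|\operatorname{Obs}\left(S_n;S,q\right)|=n\big]=\sum_{W} q^{|S\setminus W|}(1-q)^{|W|}$, the sum taken over $W\subseteq S$ with $\operatorname{Obs}\left(S_n;W\right)=V(S_n)$. I split this sum according to whether $v_0\in W$. The outcomes with $v_0\in W$ occur only when $v_0\in S$, and writing $W=\{v_0\}\cup W'$ with $W'\subseteq S\setminus\{v_0\}$ and applying \cref{fact1} to the sum over $W'$ shows their total contribution is exactly $1-q$. It then remains to add the contribution of the successful $W$ with $v_0\notin W$, which by Step 1 are exactly the subsets of $S\cap\{v_1,\dots,v_{n-1}\}$ of size at least $n-2$; there are at most $1+|S|$ of these, so they can be enumerated explicitly. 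Carrying this out in the six regimes — $v_0\notin S$ with $|S|\le n-3$, $=n-2$, $=n-1$, and $v_0\in S$ with $|S|\le n-2$, $=n-1$, $=n$ — and tracking the number of size-$(n-2)$ leaf-subsets of $S$ (namely $|S|$ when $v_0\notin S$ and $|S|=n-1$, and $|S|-1$ when $S=V(S_n)$) yields each of the stated expressions; every remaining configuration has no successful $W$ and contributes $0$.

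\emph{Main obstacle.} There is no deep difficulty here: the entire content is Step~1, the structural description of power domination on a star, together with bookkeeping care in Step~2. The places most prone to error are the boundary regimes — confirming that $W=\varnothing$ is correctly excluded for $n\geq 3$, getting the count of size-$(n-2)$ leaf-subsets right in the two cases where it is nonzero, and keeping every exponent indexed by $|S|$ rather than by $n$ so that the ``$v_0\notin S$'' rows read $(1-q)^{|S|}$ and $(1-q)^{|S|}+|S|q(1-q)^{|S|-1}$ as claimed.
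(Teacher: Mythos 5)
Your proposal is correct and follows essentially the same route as the paper: both rest on the observation that the star is fully observed exactly when $v_0$ survives or when at least $n-2$ leaves survive, and then compute the probability case by case (the paper phrases this as a disjoint-event decomposition on $S^*$, you as an enumeration of the successful subsets $W\subseteq S$ collapsed via \cref{fact1}, which is the same computation). Your explicit flagging of the $n\geq 3$ hypothesis, which rules out $W=\varnothing$ in the $|W|\geq n-2$ condition, is a small point of added care that the paper leaves implicit.
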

            \begin{proof}
                First suppose $v_0\not\in S$.
                If $S$ does not have at least $n-2$ leaves, then $S_n$ can never be fully observed.
                If $|S|=n-2$, then we need every vertex to fully observe $S_n$, which occurs with probability $(1-q)^{|S|}$.
                If $|S|=n-1$, then to fully observe $S_n$ either no PMUs fail or one PMU fails.
                These are disjoint events which occur with probabilities $(1-q)^{|S|}$ and $|S|q(1-q)^{|S|-1}$ respectively.
                
                Now suppose $v_0\in S$.
                If $|S|\leq n-2$ then the only way that $S_n$ is fully observed is if $v_0\in S^*$, which occurs with probability $1-q$.
                If $|S|=n-1$, the probability that $S_n$ is fully observed is equal to
                \[\operatorname{Prob} \big[ v_0\in S^*\vee\{v_0\not\in S^*\wedge |S^*|=n-2\} \big]=(1-q)+q(1-q)^{|S|-1}.\]
                Finally, if $|S|=n$ then the probability that $S_n$ is fully observed is equal to
                \begin{align*}
                    \operatorname{Prob} \big[v_0\in S^*\vee\{v_0\not\in S^*\wedge |S^*|\geq n-2\}\big]
                    &=(1-q)+q \operatorname{Prob} \big[ \big(|S^*|=n-2\vee |S^*|=n-1\big):v_0\not\in S^*\big]\\
                    &=(1-q)+q\left((|S|-1)q(1-q)^{|S|-2}+(1-q)^{|S|-1}\right)
                \end{align*}
                Therefore the result holds.
            \end{proof}
            As a secondary example, consider the star $S_{18}$, a probability of PMU failure of $q=0.1$, and a desired probability of observing the entire graph of $95\%$.
            Then by \cref{prop:startotalobs} we determine that utilizing the entire vertex set yields
            \[\operatorname{Prob}\Big[|\operatorname{Obs}\left(S_{18};V(S_{18}),q\right)|= 18 \Big] = 1-(0.1)+(0.1)(0.9)^{17}+(17)(0.1)^2(0.9)^{16} = 0.9482,\]
            which means that it is not possible to utilize \emph{sets} of vertices in order to obtain the desired probability of observing the entire graph.
            However, if we instead place two PMUs on the universal vertex $v_0$ to create a $1$-rPDS,
            \[\operatorname{Prob}\Big[|\operatorname{Obs}\left(S_{18};\{v_0,v_0\},q\right)|= 18 \Big] = 1-(0.1)^2 = 0.99.\]
            Hence, it is only with \emph{multisets} that one can achieve $95\%$ probability to observe $S_{18}$.
    \section{The expected value polynomial for graph families}\label{sec:families}
        We determine the expected value polynomial for a PMU placement for the following graph families: a generalization of barbell graphs, stars, and complete multipartite graphs.
        \subsection{Generalized Barbells}
            Given a graph $G$, let $\overline{G}$ denote the \emph{complement of $G$} where $V(\overline{G})=V(G)$ and $E(\overline{G})=\{xy:xy\not\in E(G)\}$.
            \begin{lem}\label{lem:barbell}
                Let $G\in\{C_n,W_n,K_n,\overline{C_{n+2}} : n\geq3\}$.
                Construct $H$ from $G$ by attaching a leaf to an arbitrary vertex of $G$.
                Then for any $v\in V(G)$, $\{v\}$ is a power dominating set for $H$.
            \end{lem}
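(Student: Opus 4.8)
The plan is to fix the attached leaf $\ell$ together with its unique neighbour $w\in V(G)$, fix $v\in V(G)$, and analyse the power domination process on $H$ started from the singleton $\{v\}$. The first step is a reduction: it suffices to show that every vertex of $V(G)$ is observed. Indeed, if $V(G)\subseteq\operatorname{Obs}\left(H;\{v\}\right)$, then either $v=w$, so that $\ell\in N_H[v]$ is observed already in the domination step, or $v\neq w$, in which case once $V(G)$ is observed the vertex $w$ has all of $N_G(w)\subseteq V(G)$ observed and hence has $\ell$ as its unique unobserved neighbour, so $w$ observes $\ell$. Thus the argument reduces to establishing $V(G)\subseteq\operatorname{Obs}\left(H;\{v\}\right)$, which I would verify one family at a time.

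For $G=K_n$, and for $G=W_n$ when $v$ is the hub, this is immediate since $N_H[v]\supseteq V(G)$, so the domination step alone suffices. For $G=C_n$, and for $G=W_n$ when $v$ is a rim vertex, the domination step observes three consecutive cycle (respectively rim) vertices, together with the hub in the wheel case; zero forcing then propagates along the cycle (respectively the rim) in both directions, exactly as it does on an ordinary cycle from three consecutive vertices. The only new feature is that $w$ carries the extra neighbour $\ell$: the propagation front that reaches $w$ first can still force $w$ itself, but thereafter $w$ has two unobserved neighbours — its far cycle/rim neighbour and $\ell$ — and so stalls; it is the front arriving from the other side that observes $w$'s far neighbour, completing $V(G)$ (after which $w$ forces $\ell$). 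The degenerate instances, namely $W_4=K_4$ and the cases where $v$ is adjacent to $w$ or $w$ is the hub, fall under the same description and I would treat them briefly.

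For $G=\overline{C_{n+2}}$ with $n\geq3$, so $n+2\geq5$, write the underlying cycle as $u_1,\dots,u_{n+2}$, so that the only non-neighbours of $u_i$ in $\overline{C_{n+2}}$ are $u_{i-1}$ and $u_{i+1}$. Taking $v=u_i$, the domination step leaves exactly $u_{i-1}$ and $u_{i+1}$ unobserved in $V(G)$. Using $n+2\geq5$ — which makes $u_{i-2}$ and $u_{i+2}$ distinct from one another and from $v,u_{i-1},u_{i+1}$ — I would check that $u_{i-2}$ is adjacent to $u_{i+1}$ but not to $u_{i-1}$, and symmetrically for $u_{i+2}$; hence as long as $w\notin\{u_{i-2},u_{i+2}\}$, the vertices $u_{i-2}$ and $u_{i+2}$ force $u_{i+1}$ and $u_{i-1}$ respectively and $V(G)$ is observed. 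If $w$ equals $u_{i-2}$ or $u_{i+2}$ then that vertex is blocked by $\ell$, but the other one still fires — say $u_{i+2}$ forces $u_{i-1}$ — and then $u_{i-1}$, which is adjacent to $u_{i+1}$ and not to $\ell$, forces $u_{i+1}$, so $V(G)$ is again observed. The step I expect to be the main obstacle is precisely this bookkeeping around the leaf: in each family the obvious forcing scheme for $G$ may use $w$ as a forcing vertex, and one must always exhibit an alternative forcing vertex — the second cycle front, the twin $u_{i\mp2}$, or the freshly forced $u_{i-1}$ — whose closed neighbourhood avoids $\ell$. With $V(G)$ observed, the reduction of the first paragraph completes the proof.
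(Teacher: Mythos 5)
Your proposal is correct and follows essentially the same route as the paper: a family-by-family case analysis in which the leaf is deferred to a final forcing step, the cycle and wheel cases are handled by two-sided propagation around the rim, and the $\overline{C_{n+2}}$ case uses $u_{i-2}$ and $u_{i+2}$ to force the two non-dominated vertices, with a relay through the first-forced vertex when the leaf blocks one of them. Your treatment of the stall at the attachment vertex $w$ is in fact slightly more explicit than the paper's, but the underlying argument is the same.
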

            \begin{proof}
                Throughout what follows, let $x$ be the leaf attached to some vertex of $G$.
                
                If $G=K_n$ then any vertex $v\in V(G)$ dominates $V(G)$ and observes $x$ in either the domination step or at most one zero forcing step.
                
                If $G=C_n$, then any $v\in V(G)$ is a power dominating set for $C_n$, and the power domination process is unaffected by attaching $x$. 
                Then $x$ becomes observed after at most one additional zero forcing step. 
                
                When $G=W_n$, the leaf $x$ can be adjacent to either the cycle or the universal vertex.
                If $x$ is adjacent to the cycle, then for any $v\in V(G)$ the center of the wheel is observed in domination step, after which the $G=C_n$ case is recovered.
                If $x$ is adjacent to the universal vertex, power domination proceeds as on $W_n$ before observing $x$ in at most one additional step.
                
                Finally, let $G=\overline{C_{n+2}}$, label the vertices of $G$ cyclically $v_0,\ldots,v_{n-1}$ with indices modulo $n$, and by symmetry assume $x$ is adjacent to $v_0$ in $H$.
                Let $v_i\in V(G)$ be arbitrary.
                If $i=0$ then we recover the case of $\overline{\gamma}_P\left(\overline{C_{n+2}}\right)$ from \cite{gjlr20}.
                Otherwise, notice that $v_i$ dominates everything except $v_{i-1}$ and $v_{i+1}$.
                Then there are three unobserved vertices remaining.
                At least one of $v_{i-2}$ or $v_{i+2}$ is not adjacent to $x$, without loss of generality say $v_{i+2}$ is not adjacent to $x$.
                Then $v_{i+2}$ observes $v_{i-1}$ as its only unobserved neighbor.
                Finally, one of $v_{i-1}$ or $v_{i-2}$ is not a neighbor of $x$ and hence observes $v_{i+1}$.
                Since all of $V(G)$ has been observed, $x$ becomes observed.
            \end{proof}
            We now introduce the generalized barbell graph.
            \begin{defn}
                Let $G_1, G_2$ be graphs and pick $x_1\in V(G_1)$, $x_2 \in V(G_2)$.
                Add the edge $x_1x_2$ and subdivide it $m\geq 0$ times.
                The resultant graph is a \emph{generalized barbell graph}, denoted by $B(G_1,x_1, G_2, x_2, m)$.
            \end{defn}
            \begin{figure}[ht]
                \centering
                \includegraphics{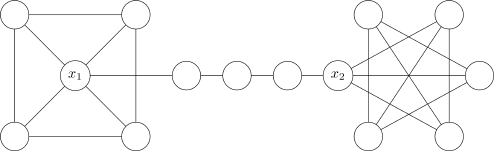}
                \caption{The generalized barbell graph $B(W_5,x_1,\overline{C_6},x_2,3)$.}
                \label{fig:genbarbellexample}
            \end{figure}
    
            The $m$ vertices of a generalized barbell graph $B(G_1,x_1,G_2,x_2,m)$ between $x_1$ and $x_2$ are referred to as the \emph{central path} of the graph.
            If $m=0$, then $G_1$ and $G_2$ are connected by an edge, and the central path is empty.
            Note that $B(K_{n},x_1,K_{n},x_2,0)$ for any $x_1,x_2$ is the usual barbell graph.
            \cref{fig:genbarbellexample} demonstrates the generalized barbell graph $G(W_5,x_1,\overline{C_6},x_2,3)$.
            We now determine the expected value polynomial for certain generalized barbell graphs.
            \begin{prop}\label{prop:barbell}
                Let $G_1, G_2 \in \{K_1,K_n, W_n, C_n,\overline{C_{n+2} } : n\geq 3\}$ with $|V(G_1)|=\ell$ and $|V(G_2)|=n$, and construct the generalized barbell graph $G=B(G_1,x_1,G_2,x_2,m)$ for any $x_i \in V(G_i)$.
                Let $S_{r,s,t}$ be a subset or submultiset of $V(G)$ containing $r$ vertices from $G_1$, $s$ vertices from the central path, and $t$ vertices from $G_2$.
                Note if $m=0$ then $s=0$ necessarily.
                Then for a given probability of PMU failure $q$,
                \(\mathcal{E}\left(G;S_{r,s,t},q\right)\) is given by
                \[(\ell+m+n)(1-q^r)(1-q^t)+(\ell+m+1)(1-q^r)q^t+(m+n+1)q^r(1-q^t)+(m+2)q^{r+t}(1-q^s).\]
            \end{prop}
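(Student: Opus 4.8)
The plan is to compute $\mathcal{E}\left(G;S_{r,s,t},q\right)=\operatorname{\mathbb{E}} \left[ |\operatorname{Obs}\left(G;S_{r,s,t},q\right)| \right]$ by conditioning on which of three ``blocks'' of PMUs survive the sensor-failure step. Write $w_1,\dots,w_m$ for the central path vertices and set $w_0:=x_1$, $w_{m+1}:=x_2$, and let $S^*$ denote the random surviving submultiset. Let $A$ be the event that at least one of the $r$ PMUs from $G_1$ survives, $C$ the event that at least one of the $t$ PMUs from $G_2$ survives, and $D$ the event that at least one of the $s$ central-path PMUs survives. Since these events depend on pairwise disjoint collections of independently failing PMUs, they are mutually independent with $\operatorname{Prob} \big[ A \big]=1-q^{r}$, $\operatorname{Prob} \big[ C \big]=1-q^{t}$, and $\operatorname{Prob} \big[ D \big]=1-q^{s}$. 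The key is to show that $|\operatorname{Obs}\left(G;S^*,q\right)|$ depends only on which of $A$, $C$, $D$ occur, after which linearity of expectation over the (at most five) cases finishes the computation.

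\textbf{Structural claim.} I would establish that $|\operatorname{Obs}\left(G;S^*,q\right)|$ equals $\ell+m+n$ when both $A$ and $C$ occur; equals $\ell+m+1$ when $A$ occurs but not $C$; equals $m+n+1$ when $C$ occurs but not $A$; equals $m+2$ when $D$ occurs but neither $A$ nor $C$; and equals $0$ otherwise (the last case being $S^*=\varnothing$). The engine is the following statement about a single surviving PMU $v\in V(G_1)$: it observes exactly $V(G_1)\cup\{w_1,\dots,w_m,x_2\}$. Indeed, \cref{lem:barbell} applied to the graph obtained from $G_1$ by attaching one leaf at $x_1$ (trivial if $G_1=K_1$) shows $v$ observes all of $G_1$ together with that leaf; moreover in any such process the leaf never performs a force, since its only neighbour $x_1$ becomes observed no later than the leaf itself, so the forcing sequence produced uses only vertices of $V(G_1)$ as forcers, and every such vertex has the same neighbourhood in $G$ as in the auxiliary graph --- hence the same sequence is legal in $G$ (with the leaf identified with $w_1$, or with $x_2$ when $m=0$). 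Once $V(G_1)\cup\{w_1\}$ is observed, $w_1$ forces $w_2$, then $w_2$ forces $w_3$, and so on along the central path until $x_2$ is observed. Conversely, as long as no initial PMU lies in $G_2$, observation cannot reach $V(G_2)\setminus\{x_2\}$: the only observed vertex ever adjacent to that set is $x_2$, which retains at least two unobserved neighbours there because $G_2\in\{K_n,W_n,C_n,\overline{C_{n+2}}:n\ge3\}$, or else $G_2=K_1$ and there is nothing beyond $x_2$. The symmetric statements hold with $G_1$ and $G_2$ interchanged, and a single surviving central-path PMU $w_i$ observes $\{w_{i-1},w_i,w_{i+1}\}$ and then propagates along the path to $x_1$ and $x_2$ but no further. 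Combining these facts with the monotonicity that adding surviving PMUs only enlarges $\operatorname{Obs}$ (implicit in the proof of \cref{lem:Esubsetmonotone}) verifies every case of the structural claim.

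\textbf{Computation.} Using linearity of expectation over the five cases and the independence of $A$, $C$, $D$,
\begin{align*}
\mathcal{E}\left(G;S_{r,s,t},q\right) &=(\ell+m+n)(1-q^{r})(1-q^{t})+(\ell+m+1)(1-q^{r})q^{t}\\
&\qquad+(m+n+1)q^{r}(1-q^{t})+(m+2)\,q^{r+t}(1-q^{s}),
\end{align*}
which is exactly the asserted expression. (When $m=0$ the central path is empty, $s=0$, and the last term vanishes, as it should.)

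The step I expect to be the main obstacle is the structural claim, and within it two delicate points. First, transporting the conclusion of \cref{lem:barbell} from ``$G_1$ plus a leaf'' into the barbell $G$: this requires the observation that the pendant leaf (equivalently, the first central-path vertex, or $x_2$ when $m=0$) is always observed last and hence never itself forces, so the forcing sequence guaranteed by \cref{lem:barbell} is literally a valid forcing sequence in $G$. Second, confirming that observation genuinely stalls at $x_1$ and at $x_2$ whenever the corresponding side has no surviving PMU --- this is precisely where the hypothesis $G_i\in\{K_1,K_n,W_n,C_n,\overline{C_{n+2}}:n\ge3\}$ is used, since it forces either $|V(G_i)|=1$ or $\deg_{G_i}(x_i)\ge2$. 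Once these are in place, the case analysis and the algebra reduce to bookkeeping.
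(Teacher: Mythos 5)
Your proposal is correct and takes essentially the same approach as the paper: partition the PMUs into the $G_1$, central-path, and $G_2$ blocks, use \cref{lem:barbell} to determine the observed set according to which blocks have a surviving PMU, and compute the expectation from the resulting four-case decomposition. You simply supply more detail than the paper does, particularly in justifying that propagation stalls at $x_1$ and $x_2$ and in transporting the conclusion of \cref{lem:barbell} into the barbell graph, where the paper leaves these as ``basic probability calculations'' and an appeal to the lemma.
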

            \begin{proof}
                Partition $S_{r,s,t}=R\sqcup S\sqcup T$ as disjoint vertex subsets of size $r,s,$ and $t$ corresponding to $G_1$, the central path, and $G_2$ respectively.
                Denote by $R^*$ the random set which contain a vertex $v\in R$ with probability $1-q$, and similarly write $S^*$ and $T^*$.
                Notice that if $|R^*|\geq1$ then by \cref{lem:barbell}, $G_1$, the central path, and $x_2$ will be observed.
                Similarly, if $|T^*|\geq1$ then $G_2$, the central path, and $x_1$ will be observed.
                
                On the other hand, if $|R^*|=|T^*|=0$ but $|S^*|>0$, then the entire central path, $x_1$, and $x_2$ will be observed.
                Hence, we can expand $\mathcal{E}\left(G;S_{r,s,t},q\right)$ using the definition of expected value to obtain
                \begin{align*}
                &(\ell+m+n)\operatorname{Prob} \big[\exists v\in R^*\wedge\exists v\in T^*\big]+(\ell+m+1)\operatorname{Prob} \big[\exists v\in R^*\wedge\not\exists v\in T^*\big]+\\
                &(m+n+1)\operatorname{Prob} \big[\not\exists v\in R^*\wedge\exists v\in T^*\big]+(m+2)\operatorname{Prob} \big[\not\exists v\in R^*\wedge\not\exists v\in T^*\wedge \exists v\in S^*\big].
                \end{align*}
                The result follows from some basic probability calculations.
            \end{proof}
        \subsection{Stars}
        To calculate the expected number of observed vertices in the star, we first break down the possible types of PMU placements into lemmas.
            \begin{lem}\label{lem:StarNoCentral}
                Let $S$ be a subset of the vertices of $S_n$ which does \textbf{not} contain the universal vertex with $|S|\leq n-3$, and let $q$ be a probability of PMU failure.
                Then 
                \begin{align*}
                    \mathcal{E}\left(S_n;S,q\right)=1+|S|(1-q)-q^n.
                \end{align*}
            \end{lem}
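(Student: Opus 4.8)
The plan is to evaluate $\mathcal{E}\left(S_n;S,q\right)$ directly from \cref{obs:calculateexpolbinomial}: first determine $\big|\operatorname{Obs}\left(S_n;W\right)\big|$ for every $W\subseteq S$, then substitute and collapse the resulting sum with \cref{fact1} and \cref{fact2}. Write $v_0$ for the universal vertex of $S_n$, so $S$ is a set of $|S|$ leaves with $|S|\le n-3$.

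The one substantive step is the following observation: for every nonempty $W\subseteq S$, $\operatorname{Obs}\left(S_n;W\right)=\{v_0\}\cup W$, so $\big|\operatorname{Obs}\left(S_n;W\right)\big|=|W|+1$, whereas $\operatorname{Obs}\left(S_n;\varnothing\right)=\varnothing$. To see this, note that the domination step from a nonempty set $W$ of leaves produces $B=\{v_0\}\cup W$. No zero forcing can follow: every vertex of $W$ is a leaf whose only neighbor $v_0$ is already in $B$, and $v_0$ itself has $n-1-|W|$ neighbors outside $B$, which is at least $2$ since $|W|\le|S|\le n-3$; thus $v_0$ has more than one unobserved neighbor and cannot force. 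Hence $B$ does not grow beyond the domination step.

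Feeding these values into \cref{obs:calculateexpolbinomial} gives
\[
\mathcal{E}\left(S_n;S,q\right)=\sum_{W\subseteq S}\big|\operatorname{Obs}\left(S_n;W\right)\big|\,q^{|S\setminus W|}(1-q)^{|W|}=\sum_{\varnothing\neq W\subseteq S}(|W|+1)\,q^{|S|-|W|}(1-q)^{|W|}.
\]
I would then add back the $W=\varnothing$ term, which equals $q^{|S|}$, so the sum extends over all $W\subseteq S$ at the cost of subtracting $q^{|S|}$; group subsets by their common size $j=|W|$, of which there are $\binom{|S|}{j}$; and write $(j+1)\binom{|S|}{j}=j\binom{|S|}{j}+\binom{|S|}{j}$ to break the sum into two. \cref{fact2} evaluates the $j\binom{|S|}{j}$ sum to $|S|(1-q)$ and \cref{fact1} evaluates the $\binom{|S|}{j}$ sum to $1$; combined with the correction term this produces the stated closed form.

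I expect no real obstacle: the only place requiring care is the zero-forcing analysis, and specifically seeing that the hypothesis $|S|\le n-3$ is exactly what keeps $v_0$ with at least two unobserved leaf-neighbors, so that propagation never triggers and $\big|\operatorname{Obs}\left(S_n;W\right)\big|$ takes the clean value $|W|+1$. If $|S|$ were $n-2$ or larger, $v_0$ (or a leaf) could begin to force and the count would change — those boundary placements are the ones handled separately in the subsequent lemmas and in \cref{prop:startotalobs}. Everything after the structural observation is routine binomial bookkeeping.
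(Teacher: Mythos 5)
Your proposal is correct and takes essentially the same route as the paper: both reduce to the observation that $\big|\operatorname{Obs}\left(S_n;W\right)\big|=|W|+1$ for every nonempty $W\subseteq S$ (your zero-forcing justification merely spells out what the paper asserts) and then collapse the binomial sum with \cref{fact1} and \cref{fact2} after reinstating the $W=\varnothing$ term. One small point: both your derivation and the paper's yield $1+|S|(1-q)-q^{|S|}$, so the exponent $n$ in the lemma statement is a typo for $|S|$, consistent with the subsequent theorem for stars.
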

            \begin{proof}
                Since $|S|\leq n-3$ and $S$ does not contain the universal vertex, for all non-empty $W\subseteq S$ we have that $|\operatorname{Obs}\left(S_n;W\right)|=|W|+1$. Hence
                \begin{align*}
                    \mathcal{E}\left(S_n;W,q\right)=\sum_{i=1}^{|S|}(i+1)\binom{|S|}{i}q^{|S|-i}(1-q)^i &=\sum_{i=0}^{|S|}\left((i+1)\binom{|S|}{i}q^{|S|-i}(1-q)^i\right)-q^{|S|}\\
                    &=1+|S|(1-q)-q^{|S|}
                \end{align*}
                by \cref{fact1} and \cref{fact2}.
            \end{proof}
            \begin{lem}\label{lem:StarCentral}
            Let $S$ be a subset of the vertices of $S_n$ with universal vertex $v_0\in S$ and $|S|\leq n-2$, and let $q$ be a probability of PMU failure.
            Then
            \begin{align*}
                \mathcal{E}\left(S_n;S,q\right)=n(1-q)+(|S|-1)q(1-q)+q-q^{|S|}.
            \end{align*}
            \end{lem}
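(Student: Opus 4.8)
The plan is to expand $\mathcal{E}\left(S_n;S,q\right)$ using \cref{obs:calculateexpolbinomial} as a sum over subsets $W\subseteq S$ and to evaluate $\big|\operatorname{Obs}\left(S_n;W\right)\big|$ by a short case analysis. Write $s=|S|$ and $S=\{v_0\}\cup L$, where $L$ is a set of $s-1$ leaves. If $v_0\in W$, then $W$ dominates all of $S_n$ since $v_0$ is universal, so $\big|\operatorname{Obs}\left(S_n;W\right)\big|=n$. If $v_0\notin W$ and $W\neq\varnothing$, then $W$ consists of $|W|$ leaves; each such leaf observes itself and $v_0$ in the domination step, but since $|W|\leq s-1\leq n-3$ at least two leaves remain unobserved, so $v_0$ cannot zero-force and the process halts with $\big|\operatorname{Obs}\left(S_n;W\right)\big|=|W|+1$. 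The empty set contributes $0$.

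Next I would split $\mathcal{E}\left(S_n;S,q\right)$ into the part over $W$ with $v_0\in W$ and the part over nonempty $W\subseteq L$. For the first part, writing $W=\{v_0\}\cup W'$ with $W'\subseteq L$ pulls out a factor $n(1-q)$ and leaves $\sum_{W'\subseteq L}q^{(s-1)-|W'|}(1-q)^{|W'|}$, which equals $1$ by \cref{fact1}; hence this part is exactly $n(1-q)$. For the second part I would factor out the single $q$ coming from $v_0\notin W$, so that the $q$-exponent in the remaining binomial sum is indexed over the $(s-1)$-element set $L$, obtaining $q\sum_{i=1}^{s-1}(i+1)\binom{s-1}{i}q^{(s-1)-i}(1-q)^i$. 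Extending this sum to $i=0$ (which adds the term $q^{s-1}$) and applying \cref{fact1} and \cref{fact2} gives $q\big[(s-1)(1-q)+1-q^{s-1}\big]=(s-1)q(1-q)+q-q^{s}$.

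Adding the two parts yields $n(1-q)+(|S|-1)q(1-q)+q-q^{|S|}$, as claimed. The only point requiring real care is the exponent bookkeeping in the second sum: the observed-vertex count $|W|+1$ is indexed by subsets of the $(s-1)$-element set $L$, but the $q$-exponent from \cref{obs:calculateexpolbinomial} is $|S\setminus W|=s-|W|$, so one must separate out the extra factor of $q$ before invoking the binomial identities. One must also use the hypothesis $|S|\leq n-2$ precisely at the point where we claim no zero forcing occurs when $v_0\notin W$; this is the step I would double-check most carefully, though it is otherwise routine.
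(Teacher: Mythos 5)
Your proposal is correct and follows essentially the same route as the paper's proof: both split the sum from \cref{obs:calculateexpolbinomial} according to whether $v_0\in W$, use $|\operatorname{Obs}(S_n;W)|=n$ in the first case and $|W|+1$ in the second (justified by $|W|\leq|S|-1\leq n-3$), and then apply \cref{fact1} and \cref{fact2} after extending the leaf-indexed sum to include $i=0$. The exponent bookkeeping you flag (factoring out the extra $q$ for $v_0\notin W$) is handled identically in the paper.
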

            \begin{proof}
                Let $|S|=k\leq n-2$ and let $v_0$ denote the universal vertex.
                The entire graph is observed when $v_0\in S^*$, which occurs with probability $(1-q)$ and so
                \begin{align*}
                    \mathcal{E}\left(S_n;S,q\right)
                    &=\sum_{\substack{W\subseteq S:\\v_0\in W}}|\operatorname{Obs}\left(S_n;W\right)|\,q^{|S\setminus W|}(1-q)^{|W|}
                    +\sum_{\substack{W\subseteq S:\\v_0\not\in W}}|\operatorname{Obs}\left(S_n;W\right)|\,q^{|S\setminus W|}(1-q)^{|W|}\\
                    &=(1-q)\sum_{i=0}^{k-1}n\binom{k-1}{i}q^{k-1-i}(1-q)^{i}+q\sum_{i=1}^{k-1}(i+1)\binom{k-1}{i}q^{k-1-i}(1-q)^i\\
                    &= n(1-q)\sum_{i=0}^{k-1}\binom{k-1}{i}q^{k-1-i}(1-q)^i\\
                    &~~~~+q\sum_{i=0}^{k-1}i\binom{k-1}{i}q^{k-1-i}(1-q)^i+q\sum_{i=0}^{k-1}\left(\binom{k-1}{i}q^{k-1-i}(1-q)^i\right)-q^k\\
                    &=n(1-q)+(k-1)(1-q)q+q-q^k
                \end{align*}
                where on the second line, the first sum is over the number of vertices belonging to a set $W$ containing $v_0$, and the second sum is over vertices belonging to a set $W$ not containing $v_0$.
                Finally, applying \cref{fact1} and \cref{fact2}, the result follows.
            \end{proof}
            We can now determine the expected value polynomial for stars.
            \begin{thm}
                Let $S_n$ denote the star on $n$ vertices, and let $v_0$ denote the universal vertex.
                If $S$ is a set of vertices and $q$ is a probability of PMU failure, then $\mathcal{E}\left(S_n;S,q\right)$ is equal to
                \begin{align*}
                    \begin{cases}
                        1+|S|(1-q)-q^{|S|},&\text{if $v_0\not\in S$ and $|S|\leq n-3$}\\
                        1+|S|(1-q)-q^{|S|}+(1-q)^{|S|},&\text{if $v_0\not\in S$ and $|S|= n-2$}\\
                        1+|S|(1-q)-q^{|S|}+|S|q(1-q)^{|S|-1},&\text{if $v_0\not\in S$ and $|S|= n-1$}\\
                        n(1-q)+(|S|-1)q(1-q)+q-q^{|S|},&\text{if $v_0\in S$ and $|S|\leq n-2$}\\
                        n(1-q)+(|S|-1)q(1-q)+q-q^{|S|}+q(1-q)^{|S|-1},&\text{if $v_0\in S$ and $|S|= n-1$}\\
                        n(1-q)+(|S|-1)q(1-q)+q-q^{|S|}+(|S|-1)q^2(1-q)^{|S|-2},&\text{if $v_0\in S$ and $|S|= n$}
                    \end{cases}
                \end{align*}
            \end{thm}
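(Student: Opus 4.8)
The statement breaks into six cases according to whether the universal vertex $v_0$ lies in $S$ and according to $|S|$; the first and fourth cases are exactly \cref{lem:StarNoCentral} and \cref{lem:StarCentral}, so the work lies in the four boundary cases. In each of these I would compute $\mathcal{E}\left(S_n;S,q\right)$ directly from \cref{obs:calculateexpolbinomial}, which requires knowing $\big|\operatorname{Obs}\left(S_n;W\right)\big|$ for every $W\subseteq S$. The forcing behavior of a star makes this transparent: a leaf can only ever force $v_0$, and $v_0$ forces a leaf exactly when it has a unique unobserved neighbor. Hence for nonempty $W$: if $v_0\in W$ then $\big|\operatorname{Obs}\left(S_n;W\right)\big|=n$; if $v_0\notin W$ then $\big|\operatorname{Obs}\left(S_n;W\right)\big|=|W|+1$ unless $W$ omits at most one leaf of $S_n$, in which case $\big|\operatorname{Obs}\left(S_n;W\right)\big|=n$. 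Conceptually, then, each boundary case should equal the base-case formula plus a correction accounting for the extra subsets $W$ that now observe all of $S_n$.

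For the two cases with $v_0\notin S$, every nonempty $W\subseteq S$ satisfies $\big|\operatorname{Obs}\left(S_n;W\right)\big|=|W|+1$ except that a $W$ omitting exactly one leaf of $S_n$ observes $n$ vertices (this happens only for $W=S$ when $|S|=n-2$, and for the $n-1$ subsets of size $n-2$ when $|S|=n-1$). I would therefore write
\[\mathcal{E}\left(S_n;S,q\right)=\Big(\sum_{W\subseteq S}(|W|+1)\,q^{|S\setminus W|}(1-q)^{|W|}\Big)-q^{|S|}+\sum_{W}\big(n-|W|-1\big)\,q^{|S\setminus W|}(1-q)^{|W|},\]
where $-q^{|S|}$ corrects the empty-set term (true observed count $0$, not $1$) and the last sum runs over the finitely many exceptional $W$. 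The first sum collapses to $1+|S|(1-q)$ by \cref{fact1} and \cref{fact2}, and a short evaluation of the correction sum gives an extra $(1-q)^{|S|}$ when $|S|=n-2$ and an extra $|S|q(1-q)^{|S|-1}$ when $|S|=n-1$, producing the second and third cases.

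For the two cases with $v_0\in S$, I would instead split the sum of \cref{obs:calculateexpolbinomial} according to whether $v_0\in W$, as in the proof of \cref{lem:StarCentral}. The terms with $v_0\in W$ contribute $n(1-q)$ by \cref{fact1}. In the remaining terms $W$ is a set of leaves, so $|S\setminus W|\geq1$ and one factor of $q$ can be pulled out; the summand equals $|W|+1$ except when $W$ consists of all leaves in $S$ (relevant only when $|S|=n$) or all but one of them, which contribute a short correction. Applying \cref{fact1} and \cref{fact2} to the generic part and adding these corrections yields an extra $q(1-q)^{|S|-1}$ when $|S|=n-1$ and an extra $(|S|-1)q^2(1-q)^{|S|-2}$ when $|S|=n$, which are the fifth and sixth cases.

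The only real difficulty is bookkeeping: correctly enumerating which subsets $W$ observe all of $S_n$, tracking the powers of $q$ and $1-q$ attached to them, and verifying that once \cref{fact1} and \cref{fact2} absorb the generic terms the leftover boundary terms assemble into exactly the stated correction. These are routine but delicate index manipulations; no idea beyond the two binomial identities and the forcing behavior of the star is needed.
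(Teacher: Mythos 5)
Your proposal is correct and follows essentially the same route as the paper: the first and fourth cases are \cref{lem:StarNoCentral} and \cref{lem:StarCentral}, and each boundary case is obtained by adding, via linearity of expectation, a correction term for the few subsets $W$ that observe all of $S_n$ rather than the generic $|W|+1$ vertices. One small slip in your $v_0\in S$ discussion: the exceptional $W$ are exactly those omitting one leaf of $S_n$ --- that is, \emph{all} leaves of $S$ when $|S|=n-1$ and all \emph{but one} leaf of $S$ when $|S|=n$ --- so the parenthetical ``relevant only when $|S|=n$'' is attached to the wrong clause, although the correction terms you ultimately write down are the right ones.
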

            \begin{proof}
                Suppose first $v_0\not\in S$. Then $|S|\leq n-3$ is \cref{lem:StarNoCentral}.
                Notice in the proof of \cref{lem:StarNoCentral} that we assume $S^*$ always observes $|S^*|+1$ vertices.
                However, if $|S|=n-2$ and $S^*=S$ with probability $(1-q)^{|S|}$, then actually $|S^*|+2$ vertices are observed by way of the remaining leaf not accounted for in \cref{lem:StarNoCentral}.
                Thus by linearity of expectation, the result of \cref{lem:StarNoCentral} increases by $(1-q)^{|S|}$.
                When $|S|=n-1$, there are $\binom{|S|}{|S|-1}=|S|$ sets $S^*$ containing all but one leaf, each of which observes $|S^*|+2$ vertices.
                This occurs with probability $q(1-q)^{|S|-1}$ and so increases the result of \cref{lem:StarNoCentral} by $|S|q(1-q)^{|S|-1}$. 
                
                Now suppose $v_0\in S$.
                The case of $|S|\leq n-2$ is \cref{lem:StarCentral}.
                In the proof of \cref{lem:StarCentral} we assume that if $v_0\not\in S^*$, then we observe $|S^*|+1$ vertices.
                When $|S|=n-1$, $S$ contains $v_0$ and all but one leaf.
                If $S^*=S\setminus\{v_0\}$ then $S^*$ observes $|S^*|+2$ vertices, or one vertex not accounted for by \cref{lem:StarCentral}.
                This occurs with probability $q(1-q)^{|S|-1}$ and so add this to the result of \cref{lem:StarCentral} using linearity of expectation.
                When $|S|=n$, we must adjust for the case when $v_0$ and a leaf fail.
                There are $|S|-1$ ways for a single leaf to fail, and each of these occurs with probability $q^2(1-q)^{|S|-2}$.
                Thus we add $(|S|-1)q^2(1-q)^{|S|-2}$ to the output of \cref{lem:StarCentral} to get our result.
            \end{proof}
        \subsection{Complete Multipartite Graphs}
            We now calculate the expected value polynomial for complete multipartite graphs with parts of size at least $2$.
            \begin{lem}\label{lem:TwoParts}
                Let $G=K_{r_1,\ldots,r_k}$ with $k\geq 2$ and $r_i\geq 2$ for all $i$, let $S=S_{\ell_1,\ldots,\ell_k}\subseteq V(G)$ be a set such that $S$ contains $\ell_i$ vertices from the $i$th partition of $G$, and let $q$ be a probability of PMU failure.
                Then the probability that $S^*$ contains vertices from two distinct parts of $G$ is
               \begin{align*}
                   1-q^\ell\left(1+\sum_{i=1}^k (q^{-\ell_i}-1)\right)
               \end{align*}
            \end{lem}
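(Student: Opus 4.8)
The plan is to compute the complementary probability by a disjointness argument. Write $\ell=|S|=\sum_{i=1}^k\ell_i$ and recall that $S^*$ is obtained from $S$ by keeping each vertex independently with probability $1-q$. The event that $S^*$ contains vertices from two distinct parts of $G$ is the complement of the event $\mathcal{A}$ that $S^*$ is contained in a single part $V_i$, where we allow the empty case. So first I would decompose $\mathcal{A}$ as the disjoint union of $\{S^*=\varnothing\}$ and, for each $i\in\{1,\dots,k\}$, the event $\mathcal{A}_i$ that $S^*\neq\varnothing$ and $S^*\subseteq V_i$. These events are pairwise disjoint: since the parts $V_1,\dots,V_k$ are disjoint, a nonempty subset of $V_i$ cannot also be a subset of $V_j$ for $j\neq i$, and clearly none of the $\mathcal{A}_i$ is the empty-set event.

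Next I would evaluate each piece using independence of the $\ell$ keep/fail choices. We have $\operatorname{Prob}\big[S^*=\varnothing\big]=q^\ell$. For $\mathcal{A}_i$, the $\ell-\ell_i$ vertices of $S$ lying outside part $i$ must all fail, which has probability $q^{\ell-\ell_i}$, and at least one of the $\ell_i$ vertices of $S$ lying in part $i$ must survive, which has probability $1-q^{\ell_i}$; by independence $\operatorname{Prob}\big[\mathcal{A}_i\big]=q^{\ell-\ell_i}(1-q^{\ell_i})=q^\ell\bigl(q^{-\ell_i}-1\bigr)$. This also gives the correct value $0$ when $\ell_i=0$. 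Summing over the disjoint decomposition,
\[
\operatorname{Prob}\big[\mathcal{A}\big]=q^\ell+\sum_{i=1}^k q^\ell\bigl(q^{-\ell_i}-1\bigr)=q^\ell\Bigl(1+\sum_{i=1}^k(q^{-\ell_i}-1)\Bigr),
\]
and taking the complement yields the claimed expression.

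The argument is essentially bookkeeping, so I do not anticipate a real obstacle; the only points requiring care are the disjointness of the decomposition of $\mathcal{A}$ (so the probabilities simply add) and the edge case $\ell_i=0$, both of which are immediate. I would also remark that the hypotheses $k\geq2$ and $r_i\geq2$ play no role in this lemma and are merely inherited from the ambient setting, and that when $q=0$ the stated formula is read via the equivalent polynomial form $1-q^\ell-\sum_{i=1}^k\bigl(q^{\ell-\ell_i}-q^\ell\bigr)$. The main difficulty, such as it is, is purely presentational: phrasing the disjoint-union decomposition of $\mathcal{A}$ cleanly.
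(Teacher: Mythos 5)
Your proof is correct, and it takes a genuinely different (and arguably cleaner) route than the paper's. The paper also passes to the complement event $\mathcal{A}=\{S^*\subseteq A_i\text{ for some }i\}$, but it evaluates $\operatorname{Prob}\big[\bigvee_i\{S^*\subseteq A_i\}\big]$ by inclusion--exclusion over the overlapping events $\{S^*\subseteq A_i\}$; since every intersection of two or more of these events is exactly $\{S^*=\varnothing\}$, the alternating sum collapses to $\sum_{i=1}^k q^{\ell-\ell_i}-\sum_{i=2}^k(-1)^i\binom{k}{i}q^{\ell}$, which is then simplified using the identity $\sum_{i=0}^k(-1)^i\binom{k}{i}=0$ (the paper's \cref{fact3}). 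You instead partition $\mathcal{A}$ into the pairwise disjoint events $\{S^*=\varnothing\}$ and $\{\varnothing\neq S^*\subseteq V_i\}$, each of which factors immediately by independence as $q^{\ell}$ and $q^{\ell-\ell_i}(1-q^{\ell_i})$, so the probabilities simply add with no cancellation to track. The two computations are of course equivalent, but your disjoint decomposition avoids both the inclusion--exclusion bookkeeping and the appeal to the binomial identity, at the cost of having to argue (as you do) that the decomposition really is disjoint and exhaustive. Your side remarks are also accurate: the hypotheses $k\geq 2$ and $r_i\geq 2$ are not used here, and the $q=0$ edge case is handled by reading the formula in its polynomial form $1-q^{\ell}-\sum_{i=1}^k\bigl(q^{\ell-\ell_i}-q^{\ell}\bigr)$.
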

            \begin{proof}
                Let $G=K_{r_1,\ldots,r_k}$, the sets $A_1, \ldots, A_k$ be the disjoint partitions of $V(G)$, and the set $S=S_{\ell_1,\ldots,\ell_k}\subseteq V(G)$ be such that $S$ contains $\ell_i$ vertices from $A_i$.
                Let us frame the problem as its complement:
                \begin{align*}
                    \operatorname{Prob} \big[\exists v,w\in S^*\text{~s.t.~}v\in A_i,w\in A_j\big]=1-\operatorname{Prob} \big[S^*\subseteq A_i\text{~for some $i$}\big].
                \end{align*}
                Then using the property of inclusion and exclusion,
                \begin{align*}
                    \operatorname{Prob} \big[S^*\subseteq A_i\text{~for some $i$}\big]
                    &=\operatorname{Prob}\left[\bigvee_{i=1}^k \big\{S^*\subseteq A_i\big\}\right]\\
                    &=\sum_{I\subseteq [k]}(-1)^{|I|+1}\operatorname{Prob}\left[\bigwedge_{i\in I}\big\{S^*\subseteq A_i\big\}\right]\\
                    &=\sum_{i=1}^k\operatorname{Prob} \big[S^*\subseteq A_i\big]+\sum_{\substack{I\subseteq[k]\\|I|\geq 2}}(-1)^{|I|+1}\operatorname{Prob} \big[S^*=\varnothing\big]\\
                    &=\sum_{i=1}^k q^{|S\setminus(S\cap A_i)|}-\sum_{i=2}^k(-1)^{i}\binom{k}{i}q^{|S|}.
                \end{align*}
                since $q^{|S\setminus(S\cap A_i)|}$ is the probability that $S^*$ is contained in $A_i$ and $\operatorname{Prob} \big[S^*=\varnothing\big]=q^{|S|}$.
                We now use \cref{fact3} and substitute $|S|=\ell$, $|S\cap A_i|=\ell_i$ to find 
                \begin{align*}
                    \sum_{i=1}^k q^{|S\setminus(S\cap A_i)|} -\sum_{i=2}^k(-1)^{i}\binom{k}{i}q^{|S|}
                    &=\sum_{i=1}^k q^{\ell-\ell_i}-q^\ell\left(\sum_{i=0}^k(-1)^i\binom{k}{i}-(-k)-1\right)\\
                    &=q^\ell\left(1+\sum_{i=1}^k\left(q^{-\ell_i}-1\right)\right)
                \end{align*}
                from which the result follows.
            \end{proof}
            \begin{thm}
                Let $G=K_{r_1,\ldots,r_k}$ with $k\geq 2$ and $r_i\geq 2$ for all $i$, let $S=S_{\ell_1,\ldots,\ell_k}\subseteq V(G)$ be a set such that $S$ contains $\ell_i$ vertices from the $i$th partition of $G$, and let $q$ be a probability of PMU failure.
                Denote $r=|V(G)|$ and $\ell=|S|$.
                Then $\mathcal{E}\left(G;S,q\right)$ is given by
                \begin{align*}
                    r\left(1-q^\ell\left(1+\sum_{i=1}^k (q^{-\ell_i}-1)\right)\right)+\sum_{i=1}^k q^{\ell-\ell_i}(1-q^{\ell_i})\Big(r+\ell_i q^{\ell_i}-q^{\ell_i}r-\ell_i q+\ell_i q(1-q)^{\ell_i-1}\Big)
                \end{align*}
            \end{thm}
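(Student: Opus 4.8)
The plan is to first characterize $|\operatorname{Obs}\left(G;W\right)|$ for every $W\subseteq S$ in terms of how $W$ meets the parts of $G$, then split the defining sum of $\mathcal{E}\left(G;S,q\right)$ from \cref{obs:calculateexpolbinomial} along those cases, and finally evaluate the resulting binomial sums using \cref{fact1}, \cref{fact2}, and \cref{lem:TwoParts}.

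First I would record the observation behavior. Label the parts $A_1,\dots,A_k$; each $v\in A_i$ has $N[v]=\{v\}\cup(V(G)\setminus A_i)$. Consequently, if $W$ contains vertices from two distinct parts then the domination step alone already gives $\operatorname{Obs}\left(G;W\right)=V(G)$, so $|\operatorname{Obs}\left(G;W\right)|=r$. If instead $\varnothing\neq W\subseteq A_i$ with $|W|=j$, the domination step yields $(V(G)\setminus A_i)\cup W$, leaving exactly the $r_i-j$ vertices of $A_i\setminus W$ unobserved; since every vertex outside $A_i$ is adjacent to all of these and no vertex of $W$ has an unobserved neighbor, a zero force is available precisely when $r_i-j\le 1$. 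Hence $|\operatorname{Obs}\left(G;W\right)|=r$ when $j\ge r_i-1$ and $|\operatorname{Obs}\left(G;W\right)|=r-r_i+j$ when $1\le j\le r_i-2$, while $|\operatorname{Obs}\left(G;\varnothing\right)|=0$.

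Next I would partition $\mathcal{E}\left(G;S,q\right)=\sum_{W\subseteq S}|\operatorname{Obs}\left(G;W\right)|\,q^{|S\setminus W|}(1-q)^{|W|}$ into three groups: the $W$ meeting at least two parts, the $W$ with $\varnothing\neq W\subseteq A_i$ for some unique $i$, and $W=\varnothing$ (which contributes $0$). The first group equals $r$ times the probability that $S^*$ contains vertices from two distinct parts, which is exactly the first summand $r\big(1-q^{\ell}(1+\sum_i(q^{-\ell_i}-1))\big)$ of the claimed formula by \cref{lem:TwoParts}. For the second group, fixing $i$ and grouping by $j=|W|$, the contribution is $q^{\ell-\ell_i}\sum_{j=1}^{\ell_i}\binom{\ell_i}{j}(1-q)^{j}q^{\ell_i-j}\,c_{i,j}$, where $c_{i,j}$ is the observed-set size above. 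Writing $c_{i,j}=(r-r_i+j)+\delta_{i,j}$ with a zero-forcing correction $\delta_{i,j}$ supported on the at most two largest admissible values of $j$, the main part $\sum_{j}\binom{\ell_i}{j}(1-q)^{j}q^{\ell_i-j}(r-r_i+j)$ evaluates directly via \cref{fact1} and \cref{fact2}, and the corrections are a short computation. Summing over $i$ and combining with the first group should collapse to the stated expression (with $\ell_i=r_i$ producing the term $\ell_i q(1-q)^{\ell_i-1}$).

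The step I expect to be the main obstacle is the handling of the zero-forcing corrections $\delta_{i,j}$: which of $j=r_i-1$ or $j=r_i$ actually occurs depends on how $\ell_i$ compares to $r_i$, so carrying those boundary contributions through the binomial sums and into the final closed form is where the care is needed. Everything else reduces to the binomial identities already established and to \cref{lem:TwoParts}.
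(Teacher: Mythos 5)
Your overall strategy is the paper's: split $\mathcal{E}\left(G;S,q\right)$ according to whether the surviving set meets two distinct parts (handled by \cref{lem:TwoParts}) or lies inside a single part, and evaluate the single-part contributions as binomial sums via \cref{fact1} and \cref{fact2}. Your case analysis of $|\operatorname{Obs}\left(G;W\right)|$ is also the right one: for $\varnothing\neq W\subseteq A_i$ with $|W|=j$, domination observes $r-r_i+j$ vertices and zero forcing upgrades this to $r$ exactly when $j\geq r_i-1$.

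However, the obstacle you flag at the end is not a bookkeeping nuisance that more care will push through; it is a genuine divergence between your (correct) accounting and the displayed formula. The paper's proof takes the single-part count to be $r-\ell_t+j$ with a zero-forcing correction placed at $j=\ell_t-1$, i.e., it silently replaces $r_t$ by $\ell_t$, which agrees with your $c_{i,j}$ only when $\ell_i=r_i$. In addition, the paper multiplies the single-part sum $\sum_{\varnothing\neq W\subseteq S\cap A_i}|\operatorname{Obs}\left(G;W\right)|\,q^{\ell_i-|W|}(1-q)^{|W|}$ by $\operatorname{Prob}\big[\varnothing\neq S^*\subseteq A_i\big]=(1-q^{\ell_i})q^{\ell-\ell_i}$, whereas your decomposition correctly uses only the factor $q^{\ell-\ell_i}$; the factor $1-q^{\ell_i}$ is already built into that sum, so it gets counted twice. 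Concretely, for $K_{2,2}$ with $S$ a single vertex your analysis gives $\mathcal{E}\left(G;S,q\right)=4(1-q)$ while the displayed formula gives $(1-q)(4-3q)$, and for $K_{2,2}$ with $S=V(G)$ (so $\ell_i=r_i$ and only the second issue is in play) the answer is $4(1-q^4)$ while the displayed formula gives $4-12q^4+8q^6$. So carrying out your plan carefully yields a closed form that does not, and cannot, ``collapse to the stated expression'' for general $S$; reaching the stated expression would require adopting exactly the two substitutions that your own case analysis rules out.
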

            \begin{proof}
                Let $G$, $S$, $q$, $r$, and $\ell$ be as in the theorem statement, and let $A_i$ denote the vertex set of the $i$th partition.
                Then $\mathcal{E}\left(G;S,q\right)$ can be written as
                \begin{align*}
                    &r\operatorname{Prob} \big[\exists v,w\in S^*\text{~s.t.~}v\in A_i,w\in A_j\big]\\
                    &+\operatorname{Prob} \big[\varnothing\neq S^*\subseteq A_1\big]\sum_{\varnothing\neq W\subseteq A_1}|\operatorname{Obs}\left(G;W\right)|q^{|A_1\setminus W|}(1-q)^{|W|}\\
                    &~\vdots\\
                    &+\operatorname{Prob} \big[\varnothing\neq S^*\subseteq A_k\big]\sum_{\varnothing\neq W\subseteq A_k}|\operatorname{Obs}\left(G;W\right)|q^{|A_k\setminus W|}(1-q)^{|W|}.
                \end{align*}
                Notice that for a fixed $i$, 
                \begin{align*}
                    \operatorname{Prob} \big[\varnothing\neq S^*\subseteq A_i\big]=\operatorname{Prob} \big[S^*\neq\varnothing : S^*\subseteq A_i\big]\operatorname{Prob} \big[S^*\subseteq A_i\big]=(1-q^{\ell_i})q^{\ell-\ell_i}.
                \end{align*}
                Fixing $t\in[k]$, we turn our attention to the sum
                \begin{align}\label{eqn:blar}
                    \sum_{\varnothing\neq W\subseteq A_t}|\operatorname{Obs}\left(G;W\right)|q^{|A_t\setminus W|}(1-q)^{|W|}.
                \end{align}
                When $|W|=\ell_t-1$, notice $|\operatorname{Obs}\left(G;W\right)|=r$ instead of $r-1$ because the vertices in other parts will be able to observe their single white neighbor in $A_t$.
                Hence we can write \cref{eqn:blar} as
                \begin{align*}
                  \sum_{i=1}^{\ell_t}\left(\binom{\ell_t}{i}(r-\ell_t+i)q^{\ell_t-i}(1-q)^i\right)-\binom{\ell_t}{\ell_t -1}(r-\ell_t+\ell_t-1)q(1-q)^{\ell_t -1}+\binom{\ell_t}{\ell_t -1}rq(1-q)^{\ell_t -1}.
                \end{align*}
                Introducing an $i=0$ term we get
                \begin{align*}
                 \nonumber&\sum_{i=0}^{\ell_t}\left(\binom{\ell_t}{i}(r-\ell_t+i)q^{\ell_t-i}(1-q)^i\right)
                    -(r-\ell_t)q^{\ell_t}\\
                    & -\ell_t(r-\ell_t+\ell_t-1)q(1-q)^{\ell_t -1}
                    +\ell_t rq(1-q)^{\ell_t -1}
                \end{align*}
                and distributing over $(r-\ell_t+i)$ gives
                \begin{align*}
                  \nonumber&(r-\ell_t)\sum_{i=0}^{\ell_t}\binom{\ell_t}{i}q^{\ell_t-i}(1-q)^i
                    +\sum_{i=0}^{\ell_t}\binom{\ell_t}{i}i q^{\ell_t-i}(1-q)^i\\
                    & -(r-\ell_t)q^{\ell_t}
                    -\ell_t(r-\ell_t+\ell_t-1)q(1-q)^{\ell_t -1}
                    +\ell_t rq(1-q)^{\ell_t -1}.
                \end{align*}
                We now apply the binomial theorem to get
                \begin{align*}
                    r-\ell_t+\ell_t(1-q)-(r-\ell_t)q^{\ell_t}
                    -\ell_t(r-\ell_t+\ell_t-1)q(1-q)^{\ell_t -1}
                    +\ell_t rq(1-q)^{\ell_t -1}
                \end{align*}
                and after algebraic simplification we are left with 
                \begin{align*}
                    r+\ell_t q^{\ell_t}-q^{\ell_t}r-\ell_t q+\ell_t q(1-q)^{\ell_t-1}.
                \end{align*}
                Finally, recall that $\mathcal{E}\left(G;S,q\right)$ is given by
                \begin{align*}
                    \nonumber & r\operatorname{Prob} \big[\exists v,w\in S^*\text{~s.t.~}v\in A_i,w\in A_j\big]\\
                    &+\sum_{i=1}^k\left[\operatorname{Prob} \big[\varnothing\neq S^*\subseteq A_i\big]\sum_{\varnothing\neq W\subseteq A_i}|\operatorname{Obs}\left(G;W\right)|q^{|A_i\setminus W|}(1-q)^{|W|}\right]
                \end{align*}
                and so substituting terms from above and \cref{lem:TwoParts} gives the result.
            \end{proof}
    \section{Future Work}\label{sec:futurework}
        In this paper, we introduced the concept of power domination with random sensor failure and tools to study the fragile power domination process.
        Many of our results depend on particular PMU placements or specific graphs.
        We demonstrated connections to power domination variations such as failed power domination, fault-tolerant power domination, and PMU-defect-robust power domination.
        Are there more general structural conditions or other graph parameters that could be used to study fragile power domination?
        
        In \cite{hhhh02}, it is shown one can always find a power dominating set with vertices all having degree three or higher.
        \cref{fig:spectrumplot} suggests that, when choosing PMU placements, high degree vertices may be a better choice.
        Particularly, the set $\{4,4\}$ containing two vertices of degree 4 observes more vertices than the set $\{2,5\}$ containing a leaf and a degree 4 vertex for some values of $q$.
        Is this a behavior that can be characterized? 
        Is there a relationship between the degrees of vertices in a PMU placement and the expected value polynomial? 
        An easy result to see is that if a single PMU is placed at the vertex of lowest degree, we obtain $\mathcal{E}\left(G;S,q\right) \geq (\delta(G) +1 ) (1-q)$ as only the closed neighborhood of said vertex is observed.
        
        In \cref{thm:fishie}, we demonstrated a family of graphs for which the intersection of the expected value polynomials for failed power dominating sets and power dominating sets can occur at any rational probability.
        What can be said about the graph structures that realize this phenomenon?
        
        We utilized the power domination polynomial from \cite{bpst23} to study the probability of observing the entire graph in \cref{sec:probability}.
        There are also results on graph products presented in \cite{bpst23} that might be extendable to fragile power domination.
    \section*{Acknowledgements}
        This project was sponsored, in part, by the Air Force Research Laboratory via the Autonomy Technology Research Center. 
        This research was also supported by Air Force Office of Scientific Research award 23RYCOR004.

    \bibliographystyle{plain}
    \bibliography{bib}
\end{document}